\documentclass[a4paper, reqno]{amsart}

\textheight 220mm
\textwidth 150mm
\hoffset -16mm

\usepackage{amssymb}
\usepackage{amstext}
\usepackage{amsmath}
\usepackage{amscd}
\usepackage{amsthm}
\usepackage{amsfonts}
\usepackage{tikz} 
\usepackage{enumitem} 
\usepackage{hyperref} 
\usepackage{float} 
\DeclareGraphicsExtensions{.pdf,.png,.jpg}
\usepackage{pgfplots} 
\usepackage{mathtools} 
\usepackage{xspace}
\usepackage{graphicx}

\pgfplotsset{my style/.append style={axis x line=middle, axis y line=
middle, xlabel={$x$}, ylabel={$y$}, axis equal }}

%Notation
\newcommand{\pd}{\text{pd}}
\newcommand{\cQ}{\tilde{Q}}
\newcommand{\tno}{\tau^-_n}
\newcommand{\tn}{\tau_n}
\newcommand{\cC}{\mathcal{C}}
\newcommand{\La}{\Lambda}
\newcommand{\LS}{\mathcal{LS}_n}
\newcommand{\RS}{\mathcal{RS}_n}
\newcommand{\LSt}{\mathcal{LS}_2}
\newcommand{\RSt}{\mathcal{RS}_2}
\newcommand{\LSk}{\mathcal{LS}}
\newcommand{\RSk}{\mathcal{RS}}
\newcommand{\shom}{D\text{\underline{Hom}}}
\newcommand{\coshom}{D\text{$\overline{\text{Hom}}$}}
\newcommand{\mla}{\text{mod}\La}
\newcommand{\om}{\Omega}
\newcommand{\Ext}{\text{Ext}_{\Lambda}}
\newcommand{\Hom}{\text{Hom}_{\Lambda}}
\newcommand{\Corth}{\mathcal{C}{^{\perp_n}}}
\newcommand{\orthC}{{^{\perp_n}}\mathcal{C}}
\newcommand{\gldim}{\text{gl.dim.}}
\newcommand{\cL}{\tilde{\Lambda}}
\newcommand{\Lml}{\Lambda_{m,l}}
\DeclareMathOperator{\rad}{rad}
\DeclareMathOperator{\End}{End}
\DeclareMathOperator{\coker}{coker}
\DeclareMathOperator{\dimv}{\underline{\text{dim}}}
\DeclareMathOperator{\id}{Id}
\DeclareMathOperator{\add}{add}

\DeclarePairedDelimiter\floor{\lfloor}{\rfloor}
\DeclarePairedDelimiter\ceil{\lceil}{\rceil}

\newcommand{\quib}[8]{\begin{tikzpicture} \node (WZ) at (-0.33,0) {$#8$}; \node (WW) at (0.33,0) {$#7$}; \node (ZZ) at (0.22,-0.22) {$#6$}; \node (XX) at (0.11,0) {$#5$}; \node (XY) at (0,0.22) {$#3$}; \node (YX) at (0,-0.22) {$#4$}; \node (YY) at (-0.11,0) {$#2$}; \node (XZ) at (-0.22,-0.22) {$#1$};  \end{tikzpicture}}

\numberwithin{equation}{section}

%Theorems,Definitions,Remarks,Corollarys,Lemmas
\theoremstyle{plain}

\newtheorem{theorem}{Theorem}[section] % reset theorem numbering for each chapter

\newenvironment{TheoreM}[1]
  {\innercustomthm}
  {\endinnercustomthm}

\theoremstyle{definition}
\newtheorem{definition}[theorem]{Definition} % definition numbers are dependent on theorem numbers
\newtheorem{example}[theorem]{Example} % same for example numbers
\newtheorem{corollary}[theorem]{Corollary}
\newtheorem{lemma}[theorem]{Lemma}
\newtheorem{proposition}[theorem]{Proposition}
\newtheorem{remark}{Remark}

\begin{document}

\title{$n$-cluster tilting subcategories of representation-directed algebras }

\author{ Laertis Vaso }
\address{Department of Mathematics\\
  Uppsala University\\
  P.O. Box 480, 751 06 Uppsala, Sweden}
\email{laertis.vaso@math.uu.se}

\maketitle

\tableofcontents

\begin{abstract}We give a characterization of $n$-cluster tilting subcategories of represen\-tation-dire\-cted algebras based on the $n$-Auslander-Reiten translations. As an application we classify acyclic Nakayama algebras with homogeneous relations which admit an $n$-cluster tilting subcategory. Finally, we classify Nakayama algebras of global dimension $d<\infty$ which admit a $d$-cluster tilting subcategory.

\end{abstract}

\section{Introduction}
\setcounter{section}{1}

In representation theory of finite-dimensional algebras, one aims to understand the modules over an algebra and the homomorphisms between them. In the case of a representation-finite algebras, classical Auslander-Reiten theory gives a complete picture of the module category, see for example \cite{ARS}. In Osamu Iyama's higher-dimensional Auslander-Reiten theory, introduced in \cite{IYA2} and \cite{IYA1}, one replaces the module category with a subcategory with suitable homological properties called an \textit{$n$-cluster tilting subcategory}, where $n$ is a positive integer. 

If an $n$-cluster tilting subcategory exists, it behaves similarly to the module category from the perspective of Auslander-Reiten theory. In particular, it contains all the projective and injective modules and there are many higher-dimensional analogues of classical notions. For instance, \textit{$n$-almost split sequences} and the \textit{$n$-Auslander-Reiten translations} $\tn$ and $\tno$ become almost split sequences and the Auslander-Reiten translations $\tau$ and $\tau^-$ when $n=1$.

If an $n$-cluster tilting subcategory admits an additive generator $M$, $M$ is called an \textit{$n$-cluster tilting module} and we say that the algebra is \textit{weakly $n$-representation-finite}. If moreover $n$ is equal to the global dimension $d$ of the algebra, the $d$-cluster tilting subcategory is unique and we say that the algebra is \textit{$d$-representation-finite}. In Theorem 3.1 of \cite{IO} it is shown that $d$-representation-finite algebras play the role of hereditary representation-finite algebras in higher-dimensional Auslander-Reiten theory.

Since the existence of an $n$-cluster tilting subcategory is far from guaranteed, it is natural to to ask under which conditions an $n$-cluster tilting subcategory exists. We study this question in the case of representation-directed algebras and give the following characterization.

\begin{TheoreM}{1}
\label{char}
Assume $\La$ is a representation-directed algebra and let $\cC$ be a full subcategory of $\mla$, closed under direct sums and summands. Denote by $\cC_P$ and $\cC_I$ the sets of isomorphism classes of indecomposable nonprojective respectively noninjective $\La$-modules in $\cC$. Then $\cC$ is an $n$-cluster tilting subcategory if and only if the following conditions hold:
\begin{itemize}
\item[(1)] $\La\in \cC$,
\item[(2)] $\tn$ and $\tno$ induce mutually inverse bijections
\[
\begin{tikzpicture}
\node (0) at (0,0) {$\cC_P$};
\node (1) at (2,0) {$\cC_I$,};

\draw[-latex] (0) to [bend left=20] node [above] {$\tn$} (1);
\draw[-latex] (1) to [bend left=20] node [below] {$\tno$} (0);
\end{tikzpicture}
\]

\item[(3)] $\om^i M$ is indecomposable for all $M\in \cC_P$ and $0<i<n$,
\item[(4)] $\om^{-i}N$ is indecomposable for all $N\in \cC_I$ and $0<i<n$.
\end{itemize}
\end{TheoreM}

\begin{remark}
\label{uniqueness}
Let us make two remarks about Theorem \ref{char}:
\begin{itemize}
\item[(a)] (1) and (2) are known to be necessary for any finite-dimensional algebra (\cite{IYA1}, Theorem 2.8). Moreover, (3) and (4) are also necessary for any finite-dimensional algebra by Corollary \ref{easy}.

\item[(b)] Let $\cC$ be an $n$-cluster tilting subcategory of $\mla$ where $\La$ is representation-direced, and $M\in \cC$ be indecomposable. By representation-directedness, (2)-(4) imply that $\tau_n^{-i} M=0$ and $\tau_n^j M=0$ for $i$ and $j$ large enough. Then (2) implies that $M=\tau_n^{-N} P$ for some projective indecomposable module $P$ and some $N\geq 0$. Using (1) and (2) we conclude that $\cC=\add\left(\bigoplus_{r\geq 0}^{\infty}\left(\tau_n^{-r}\La\right)\right)$.
\end{itemize}
\end{remark}

As an application, we characterize the acyclic Nakayama algebras with homogeneous relations which admit an $n$-cluster tilting subcategory.

\begin{TheoreM}{2}
\label{second}
Let $Q_m$ be the quiver
\[
\begin{tikzpicture}[scale=0.86, transform shape]
\node (Q) at (-1,0) {$Q_m:$};
\node (1) at (0,0) {$m$}; 
\node (2) at (2,0) {$m-1$};
\node (3) at (4,0) {$m-2$};
\node (4) at (6,0) {$\cdots$};
\node (m-1) at (7.5,0) {$2$};
\node (m) at (9,0) {$1$};

\draw[->] (1) -- (2) node[draw=none,midway,above] {$a_{m-1}$};
\draw[->] (2) -- (3) node[draw=none,midway,above] {$a_{m-2}$};
\draw[->] (3) -- (4) node[draw=none,midway,above] {$a_{m-3}$};
\draw[->] (4) -- (m-1) node[draw=none,midway,above] {$a_{2}$};
\draw[->] (m-1) -- (m) node[draw=none,midway,above] {$a_{1}$};
\end{tikzpicture},
\]
Then $KQ_m/(\rad KQ_m)^l$ admits an $n$-cluster tilting subcategory if and only if $l=2$ and $m=nk+1$ for some $k\geq 0$ or $n$ is even and $m=\frac{n}{2}l+1+k(nl-l+2)$ for some $k\geq 0$.
\end{TheoreM}

Cyclic Nakayama algebras with homogeneous relations which admit $n$-cluster tilting subcategories are classified by Darp{\"o} and Iyama in \cite{DI}. The case $l=2$ in Theorem \ref{second} was first considered by Jasso in \cite{JAS}, Proposition 6.2. Moreover Iyama and Opperman completely classify $2$-representation finite acyclic Nakayama algebras in \cite{IO}, Theorem 3.12. It turns out that $d$-representation-finite Nakayama algebras arise only as acyclic Nakayama algebras with homogeneous relations. Therefore, we also give a complete classification of $d$-representation-finite Nakayama algebras.

\begin{TheoreM}{3}
Let $\La$ be a Nakyama algebra of global dimension $d<\infty$. The following are equivalent.
\begin{itemize}
\item[(i)] $\La$ is $d$-representation-finite.
\item[(ii)] $\La=KQ_m/(\rad KQ_m)^l$ and $d$ is even or $l=2$.
\item[(iii)] $\La=KQ_m/(\rad KQ_m)^l$ and $l\mid m-1$ or $l=2$.
\end{itemize}
Then $d=2\frac{m-1}{l}$.
\end{TheoreM}

\noindent \textbf{Acknowledgements.} The author wishes to thank his advisor Martin Herschend for the constant support and help during the preparation of this article. The author would also like to thank Steffen Oppermann for suggesting the proof of Proposition \ref{SO}.

\section{Preliminaries}

Throughout the paper, $K$ will be a field and $\La$ a finite dimensional unital associative $K$-algebra. We denote by $\mla$ the category of finitely generated right $\La$-modules and in the following we say module instead of right $\La$-module. We will denote by $d$ the global dimension of $\La$ and by $D$ the duality $\Hom(-,K)$.

Recall that if $M$ is an indecomposable nonprojective module, then there exists an almost split sequence 
\[0 \rightarrow \tau M \rightarrow E \rightarrow M \rightarrow 0\]
in $\mla$ and, similarly, if $N$ is an indecomposable noninjective module, then there exists an almost split sequence
\[0 \rightarrow N \rightarrow F \rightarrow \tau^- N \rightarrow 0\]
where $\tau$ and $\tau^-$ are the \textit{Auslander-Reiten translations}. In particular, we have the Auslander-Reiten formulas
\[\Ext^1(M,N) \cong \shom (\tau^- N, M) \cong \coshom (N, \tau M).\]
For further details we refer to chapter IV in \cite{ASS}.

Let $X\in \mla$. We will denote by $\om X$ the \textit{syzygy} of $X$, that is the kernel of $P\twoheadrightarrow X$, where $P$ is the projective cover of $X$ and by $\om^- X$ the \textit{cosyzygy} of $X$, that is the cokernel of $X\hookrightarrow I$ where $I$ is the injective hull of $X$. Note that $\om X$ and $\om^- X$ are unique up to isomorphism. Following \cite{IYA1}, we denote by $\tn$ and $\tno$ the \textit{$n$-Auslander-Reiten translations} defined by $\tn X = \tau (\om^{n-1} X)$ and $\tno X = \tau^- (\om^{-(n-1)}X)$.

\medskip
In this paper, all subcategories considered will be full and closed under direct sums and summands. Let $\cC$ be a subcategory of $\mla$. A morphism $f:M\rightarrow X$ with $X\in\cC$ is called a \textit{left $\cC$-approximation} if $\_\circ f: \Hom(X,X') \rightarrow \Hom (M,X')$ is surjective for any $X'\in \cC$; if moreover for any $M\in\mla$ there exists a left $\cC$-approximation, we say that $\cC$ is \textit{covariantly finite}. Dually we define a \textit{right $\cC$-approximation} and a \textit{contravariantly finite} subcategory. If $\cC$ is both covariantly and contravariantly finite, we say that $\cC$ is \textit{functorially finite}. Functorially finite subcategories were first introduced in \cite{AS}.

A morphism $f: M\rightarrow N$ in $\mla$ will be called \textit{left minimal} if whenever $f$ is isomorphic to $M\overset{\left(\begin{smallmatrix} f_1 \\ 0 \end{smallmatrix}\right)}{\rightarrow} N_1\oplus N_2$, we have $N_2=0$; if $f$ is also a left $\cC$-approximation, we will say that $f$ is a \textit{left minimal approximation}. Dually we define \textit{right minimal} morphisms and \textit{right minimal approximations}. It is well-known that minimal approximations are unique up to isomorphism.

\medskip
For the rest of the paper $n$ will be a positive integer. 

\begin{definition}
Let $X\in \mla$. Define the \textit{left ($\Ext^n$-)support} of $X$, denoted $\LS(X)$, to be
\[\LS(X) = \{Y\in \mla \mid \exists \; 0<i<n : \Ext^i(X,Y) \neq 0\}.\]
Similarly, define the \textit{right ($\Ext^n$-)support} of $X$, denoted $\RS(X)$, to be
\[\RS(X) = \{Y\in \mla \mid \exists \; 0<i<n : \Ext^i(Y,X) \neq 0\}.\]
\end{definition}

The following definition is due to Iyama (\cite{IYA1}, \cite{IYA2}). 

\begin{definition}
We call a subcategory $\mathcal{C}$ of $\mla$ an \textit{$n$-cluster tilting subcategory} if it is functorially finite and
\begin{align*}
\cC&=\Corth=\orthC,
\end{align*}
where
\[\Corth:=\{X\in\mla \mid \Ext^i(\cC,X)=0 \text{ for all $0<i<n$}\},\]
\[\orthC:=\{X\in\mla \mid \Ext^i(X,\cC)=0 \text{ for all $0<i<n$}\}.\]
\end{definition}

Our main result is inspired by the following necessary condition for $n$-cluster tilting subcategories due to Iyama.

\begin{proposition} (\cite{IYA1}, Theorem 2.8)
\label{isomorphism}
Let $\cC$ be an $n$-cluster tilting subcategory of $\mla$. Then $\tn$ and $\tno$ induce mutually inverse bijections
\[
\begin{tikzpicture}
\node (0) at (0,0) {$\cC_P$};
\node (1) at (2,0) {$\cC_I$.};

\draw[-latex] (0) to [bend left=20] node [above] {$\tn$} (1);
\draw[-latex] (1) to [bend left=20] node [below] {$\tno$} (0);
\end{tikzpicture}
\]
\end{proposition}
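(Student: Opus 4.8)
The plan is to reduce the statement to the existence and uniqueness of $n$-almost split sequences inside $\cC$, which is the genuinely structural input, and to power that reduction by higher-dimensional analogues of the Auslander-Reiten formulas. Throughout I use that $\La\in\cC$ (so all projectives and injectives lie in $\cC$) and that $\cC=\Corth=\orthC$ forces $\Ext^i(\cC,\cC)=0$ for all $0<i<n$.

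First I would establish the $n$-Auslander-Reiten formulas. Cutting the minimal projective resolution of $X$ into short exact sequences and using that syzygy terms kill higher $\Ext$ gives the dimension-shift isomorphism $\Ext^n(X,Y)\cong\Ext^1(\om^{n-1}X,Y)$, and dually, via cosyzygies and injectives, $\Ext^n(X,Y)\cong\Ext^1(X,\om^{-(n-1)}Y)$. Feeding these into the classical formulas $\Ext^1(M,N)\cong\coshom(N,\tau M)\cong\shom(\tau^- N,M)$ yields
\[
\Ext^n(X,Y)\cong\coshom(Y,\tn X)\cong\shom(\tno Y,X).
\]
These hold over any finite-dimensional algebra and are the computational engine for everything below.

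Next I would show that $\tn$ sends $\cC_P$ into $\cC_I$ and, dually, that $\tno$ sends $\cC_I$ into $\cC_P$. The efficient packaging is to build, for each indecomposable nonprojective $X\in\cC$, an $n$-almost split sequence $0\to\tn X\to C_{n-1}\to\cdots\to C_0\to X\to 0$ with all $C_i\in\cC$: one starts from the minimal right $\cC$-approximation $C_0\to X$ (which exists and is surjective, since $\cC$ is contravariantly finite and the projective cover of $X$ factors through it), forms its kernel, approximates again, and iterates; the vanishing $\Ext^i(\cC,\cC)=0$ for $0<i<n$ is exactly what keeps each stage surjective and forces the process to terminate with a kernel lying in $\cC$, which the higher AR formula above identifies with $\tn X$. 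Its left-hand term is then, by the defining properties of the source of an $n$-almost split sequence, an indecomposable noninjective module of $\cC$, so $\tn X\in\cC_I$. Dualizing produces $n$-almost split sequences $0\to Y\to C^0\to\cdots\to C^{n-1}\to\tno Y\to 0$ for $Y\in\cC_I$ and hence $\tno Y\in\cC_P$. Mutual inverseness is then formal: starting from $X\in\cC_P$ and setting $Y:=\tn X$, the sequence ending at $X$ starts at $Y$, while the sequence starting at $Y$ ends at $\tno Y=\tno\tn X$; by uniqueness of the $n$-almost split sequence attached to the indecomposable $Y$ these two sequences coincide, whence $\tno\tn X\cong X$, and symmetrically $\tn\tno Y\cong Y$.

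The main obstacle is precisely this middle step, and inside it the assertion that the terminal kernel is an indecomposable object of $\cC$. A priori $\om^{n-1}X$ may decompose or split off projective summands, so neither membership in $\cC$ nor indecomposability of $\tn X$ is automatic from indecomposability of $X$; controlling the iterated approximation-and-kernel process and proving its exactness is the technical heart of the result, namely Iyama's construction of $n$-almost split sequences. Crucially it uses both equalities $\cC=\Corth$ and $\cC=\orthC$ simultaneously, not just one of them, which is why the argument genuinely needs the full definition of an $n$-cluster tilting subcategory rather than a one-sided $\Ext$-orthogonality condition.
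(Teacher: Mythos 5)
The paper does not actually prove this proposition: it is imported directly from Iyama (\cite{IYA1}, Theorem 2.8) and used as a black box, so there is no in-paper argument to compare yours against. Your sketch follows the standard route to Iyama's theorem (higher Auslander--Reiten formulas plus the construction of $n$-almost split sequences), and the formulas $\Ext^n(X,Y)\cong\coshom(Y,\tn X)\cong\shom(\tno Y,X)$ are correctly obtained by dimension shifting into the classical Auslander--Reiten formulas; that part is sound and holds over any finite-dimensional algebra.

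There is, however, a concrete error at the start of your construction, and the genuinely hard steps are deferred rather than proved. Since $X\in\cC$, the minimal right $\cC$-approximation of $X$ is an isomorphism, so the iteration you describe terminates immediately with zero kernel. The correct starting datum is a \emph{sink map} (minimal right almost split map) $C_0\to X$ in $\cC$, which exists because $\cC$ is functorially finite and Krull--Schmidt, and which is surjective because $X$ is nonprojective so the projective cover factors through it without being a retraction. Only from the second step onward does one use minimal right $\cC$-approximations of the successive kernels; note also that their surjectivity comes from $\La\in\cC$, not from the $\Ext$-vanishing --- the vanishing $\Ext^i(\cC,\cC)=0$ for $0<i<n$ is what forces the $n$-th kernel into $\Corth=\cC$. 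Beyond this, the identification of the terminal kernel with $\tn X$, its indecomposability and noninjectivity, and the uniqueness of $n$-almost split sequences that you invoke for mutual inverseness are precisely the content of Iyama's theorem; you correctly flag them as the technical heart but do not supply them, so the proposal is a roadmap to the cited proof rather than a self-contained argument.
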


For $M\in\mla$ we denote by $\text{add} M$ the subcategory of $\mla$ containing all modules isomorphic to direct summands of finite direct sums of $M$. Note that $\text{add} M$ is always functorially finite. Hence $\text{add}M$ is an $n$-cluster tilting subcategory if and only if $\text{add}M=\text{add}M{^{\perp_n}}={^{\perp_n}}\text{add}M$. In that case we will call $M$ an \textit{$n$-cluster tilting module}. Observe that if $\La$ is representation-finite, then any additive subcategory of $\mla$ is of the form $\text{add}M$ for some $M\in\mla$. Moreover it is clear from the definition that any $n$-cluster tilting subcategory contains $\La$ and $D\La$.

If there exists an $n$-cluster tilting subcategory with $n>d$, then $\Ext^i(D\La, \La)=0$ for all $i\leq d<\infty$, so $\La$ is semisimple. Therefore, when $\La$ is not semisimple, we have $n\leq d$. Observe also that $\mla$ is the unique $1$-cluster tilting subcategory of $\mla$ so in the following we assume $2\leq n \leq d$.

\medskip
A \textit{path} from $M_0$ to $M_t$ in $\mla$ is a sequence of nonzero nonisomorphisms $f_k: M_{k-1} \rightarrow M_k$ between indecomposable modules $M_0,M_1,\cdots , M_t$ for $t\geq 1$. We define the relation $M\leq N$ on indecomposable modules $M$ and $N$ as the transitive hull of $\Hom(M,N)\neq 0$. Then $M\leq N$ if and only if $M\cong N$ or there is a path from $M$ to $N$.

$\La$ is called \textit{representation-directed} if there is no path from $M$ to $N$ in $\mla$ with $M\cong N$. Note that representation-directed algebras are representation-finite; for a proof and more details on paths and representation-directed algebras we refer to \cite{ASS}. Note also that in this case $M\leq N$ and $N\leq M$ implies $M\cong N$. Therefore, we will write $M<N$ if $M\leq N$ and $M±\not\cong N$. In the following lemma we collect some basic results that will be used throughout. 

\begin{lemma}
\label{remark}
Let $M,N \in \mla$ be indecomposable. Then,
\begin{enumerate}
\item[(i)] $M \text{ is projective if and only if } \LS(M) = \varnothing$,
\item[(i\ensuremath{'})] $N \text{ is injective if and only if } \RS(N) = \varnothing$,
\item[(ii)] $N \in \LS(M) \text{ if and only if } M\in \RS(N)$,
\item[(iii)] if $0<k<n$ and $\om^{k-1} M$ is nonprojective, then $\om^k M \in \LS(M)$,
\item[(iii\ensuremath{'})] if $0<k<n$ and $\om^{-(k-1)}N$ is noninjective, then $\om^{-k}N\in \RS(N)$,
\item[(v)] if $X$ is an indecomposable summand of $\Omega M$, then $X \leq M$,
\item[(v\ensuremath{'})] if $Y$ is an indecomposable summand of $\Omega^- N$, then $N \leq Y$.
\end{enumerate}

If in addition $\La$ is representation-directed,
\begin{enumerate}
\item[(vi)] $M\leq N$ and $N\leq M$ imply $M\cong N$,
\item[(vii)] If $\tau M \neq 0$, then $\tau M < M$,
\item[(vii\ensuremath{'})] If $\tau^- N \neq 0$, then $N < \tau^- N$.
\end{enumerate}
\end{lemma}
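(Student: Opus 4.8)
The plan is to treat the items in groups according to the tool each requires, moving from the purely definitional and $\Ext^1$-based statements to the Auslander--Reiten-theoretic ones. First, (ii) is immediate, since both $N\in\LS(M)$ and $M\in\RS(N)$ unwind to the single condition that $\Ext^i(M,N)\neq 0$ for some $0<i<n$. For (i) and (i$'$) I would use that $n\geq 2$, so that $i=1$ always lies in the range $0<i<n$; hence $\LS(M)=\varnothing$ forces $\Ext^1(M,-)=0$ identically, which characterizes projectivity, while conversely a projective $M$ has $\Ext^i(M,-)=0$ for every $i\geq 1$, and (i$'$) is the dual statement via $\Ext^i(-,N)$ and injectivity. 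Finally, (vi) is exactly the antisymmetry of $\leq$ already recorded in the text before the lemma: if $M\leq N$ and $N\leq M$ held with $M\not\cong N$, concatenating the two paths would produce a path from $M$ to a module isomorphic to $M$, contradicting representation-directedness.

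For (iii) I would use dimension shifting along the minimal projective resolution: taking $Y=\Omega^k M$, each syzygy step strips off one projective and lowers the degree by one, giving $\Ext^k(M,\Omega^k M)\cong\Ext^1(\Omega^{k-1}M,\Omega^k M)$. The short exact sequence $0\to\Omega^k M\to P\to\Omega^{k-1}M\to 0$ coming from the projective cover $P$ of $\Omega^{k-1}M$ represents a class in this last $\Ext^1$-group, and this class is nonzero because such a sequence splits precisely when $\Omega^{k-1}M$ is projective, which is excluded by hypothesis. Hence $\Ext^k(M,\Omega^k M)\neq 0$ with $0<k<n$, giving $\Omega^k M\in\LS(M)$; statement (iii$'$) is the dual argument with cosyzygies, injective hulls and $\Ext^i(-,N)$.

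For (v) I would build the required path through the projective cover. Writing $0\to\Omega M\xrightarrow{\iota}P\to M\to 0$ with $P\to M$ the projective cover, the composite of the summand inclusion $X\hookrightarrow\Omega M$ with $\iota$ is an injection $X\to P$, hence nonzero, so some indecomposable summand $P_j$ of $P$ satisfies $\Hom(X,P_j)\neq 0$. The key point is that, because $P\to M$ is a projective cover, it induces an isomorphism on tops, so every indecomposable summand $P_j$ maps onto a simple summand of $\text{top}(M)$; in particular $\Hom(P_j,M)\neq 0$ for the \emph{same} $j$, and transitivity of $\leq$ yields $X\leq P_j\leq M$. Statement (v$'$) is dual, using the injective hull $0\to N\to I\to\Omega^- N\to 0$ and the fact that every indecomposable summand of $I$ receives a nonzero map from $N$ through the socle.

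For (vii) and (vii$'$) I would invoke the almost split sequences recorded in the preliminaries together with the standard fact (see \cite{ASS}) that, in an almost split sequence $0\to\tau M\to E\to M\to 0$, every map $\tau M\to E_j$ and $E_j\to M$ into and out of an indecomposable summand $E_j$ of $E$ is irreducible, hence nonzero and not an isomorphism. Picking any such $E_j$ gives $\Hom(\tau M,E_j)\neq 0$ and $\Hom(E_j,M)\neq 0$, whence $\tau M\leq M$; moreover these two irreducible maps form a genuine path from $\tau M$ to $M$, so $\tau M\cong M$ would contradict representation-directedness, giving $\tau M<M$, and (vii$'$) follows dually. The one point requiring care throughout (v)--(vii$'$) is ensuring that a single intermediate indecomposable summand carries nonzero maps on both sides; this is exactly what the minimality of the projective cover and injective hull, and the irreducibility of the component maps of an almost split sequence, guarantee, so I expect this bookkeeping rather than any deep input to be the only real obstacle.
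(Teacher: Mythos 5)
Your proposal is correct and follows essentially the same route as the paper's (much terser) proof: definitions for (i), (i$'$), (ii), the dimension-shift identity $\Ext^k(M,\Omega^k M)\cong\Ext^1(\Omega^{k-1}M,\Omega^k M)$ for (iii), the path through an indecomposable summand of the projective cover for (v), antisymmetry of $\leq$ for (vi), and the path furnished by the almost split sequence for (vii). The extra care you take about which single summand carries nonzero maps on both sides is exactly the bookkeeping the paper leaves implicit.
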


\begin{proof}
(i),(i\ensuremath{'}) and (ii) follow immediately from the definitions. (iii) follows by noticing $\Ext^k( M, \om^k M)=\Ext^{1}(\om^{k-1}M, \om^k M)\neq 0$. (v) follows because if $P$ is the projective cover of $M$ then there exists some indecomposable summand $P'$ of $P$ with $X\leq P'$ and since $P'\leq M$, $X\leq M$. (vi) follows since otherwise there are paths from $M$ to $N$ and from $N$ to $M$. (vii) follows since there is a path from $\tau M$ to $M$ and $\tau M\not\cong M$ for representation-directed algebras. (iii\ensuremath{'}), (v\ensuremath{'}) and (vii\ensuremath{'}) follow similarly to (iii), (v) and (vii) respectively.
\end{proof}

\section{$n$-cluster tilting subcategories of representation-directed algebras}

\subsection{Preparation}
We begin by giving a necessary condition for the existence of an $n$-cluster tilting subcategory. We thank Steffen Oppermann for suggesting the proof of the following result.

\begin{proposition}
\label{SO}
Let $\La$ be a finite dimensional algebra.
\begin{itemize}
\item[(a)] Let $M\in \mla$ be indecomposable and nonprojective and let $P$ be the projective cover of $M$. If $\om M$ is decomposable, then $\Ext^1(M, P)\neq 0$. 
\item[(b)] Let $N\in \mla$ be indecomposable and noninjective and let $I$ be the injective hull of $N$. If $\om^- N$ is decomposable, then $\Ext^1(I, N)\neq 0$. 
\end{itemize}
\end{proposition}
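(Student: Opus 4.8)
The plan is to prove (a) directly and to obtain (b) by the dual argument, replacing projective covers, syzygies and $\Hom(-,P)$ by injective hulls, cosyzygies and $\Hom(I,-)$. The starting point is the minimal projective presentation $0\to\om M\xrightarrow{\iota}P\xrightarrow{p}M\to 0$. Applying the contravariant functor $\Hom(-,P)$ and using that $\Ext^1(P,P)=0$ since $P$ is projective yields the exact sequence
\[\Hom(P,P)\xrightarrow{\iota^{*}}\Hom(\om M,P)\longrightarrow\Ext^1(M,P)\longrightarrow 0,\]
so that $\Ext^1(M,P)\cong\Hom(\om M,P)/\operatorname{im}\iota^{*}$. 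Since $\operatorname{im}\iota^{*}=\{g\iota\mid g\in\End P\}$ consists exactly of those maps $f\colon\om M\to P$ that extend to an endomorphism of $P$, it suffices to exhibit a single $f\colon\om M\to P$ that does \emph{not} extend to $P$.

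Write $\om M=A\oplus B$ with $A,B\neq 0$, and let $e\colon\om M\to\om M$ be the idempotent with image $A$ and kernel $B$. I would take $f=\iota e$ and argue by contradiction. Suppose $g\in\End P$ satisfies $g\iota=\iota e$. Then $g$ preserves the submodule $\iota(\om M)=\ker p$, because $g\iota(\om M)=\iota e(\om M)=\iota(A)\subseteq\iota(\om M)$; hence $g$ descends to an endomorphism $\bar g\in\End M$.

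The heart of the argument is a dichotomy coming from the fact that $\End M$ is local, as $M$ is indecomposable: either $\bar g$ is an automorphism, or $\operatorname{id}_M-\bar g$ is. In the latter case I replace $g$ by $\operatorname{id}_P-g$, which satisfies $(\operatorname{id}_P-g)\iota=\iota(\operatorname{id}-e)$, where $\operatorname{id}-e$ is the idempotent with image $B$ and kernel $A$. In either case I am reduced to an endomorphism of $P$ that descends to an automorphism of $M$ and restricts on $\om M$ to an idempotent with nonzero kernel. For such an endomorphism, surjectivity of $\bar g$ gives $p(g(P))=M$, hence $g(P)+\ker p=P$; since $\ker p=\iota(\om M)\subseteq\rad P$ because $p$ is a projective cover, Nakayama's lemma forces $g(P)=P$, and finite-dimensionality of $P$ then makes $g$ an isomorphism. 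But $g$ kills the nonzero submodule $\iota(B)$ (respectively $\iota(A)$), a contradiction. Therefore $f=\iota e$ does not extend to $P$, and $\Ext^1(M,P)\neq 0$.

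The step I expect to be the main obstacle is precisely this dichotomy: a priori $\bar g$ could be nilpotent, in which case $g$ itself need not be surjective and the Nakayama argument breaks down. The locality of $\End M$ together with the trick of passing to $\operatorname{id}_P-g$ is what resolves this. The essential structural input throughout is that $\ker p$ lies in $\rad P$, i.e.\ that $P$ is genuinely the projective cover of $M$ and not merely some projective surjecting onto it.
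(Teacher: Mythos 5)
Your argument is correct, but it takes a genuinely different route from the paper's. Both proofs start from the exact sequence $\Hom(P,P)\overset{\_\circ\iota}{\longrightarrow}\Hom(\om M,P)\longrightarrow\Ext^1(M,P)\longrightarrow 0$, but the paper then argues by contradiction with approximation theory: if $\Ext^1(M,P)=0$ then $\iota$ is a left $(\add P)$-approximation, which is left minimal because $M$ is indecomposable nonprojective; by uniqueness and additivity of minimal approximations, $\iota$ is isomorphic to $f_1\oplus f_2$ where $f_i$ are minimal left $(\add P)$-approximations of the two summands of $\om M$, and since $P$ is the projective cover the $f_i$ are monomorphisms but not isomorphisms, so $M\cong\coker f_1\oplus\coker f_2$ is decomposable, a contradiction. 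You instead exhibit an explicit nonzero class in $\Ext^1(M,P)$, namely $\iota e$ modulo $\operatorname{im}(\_\circ\iota)$, and rule out a factorization $g\iota=\iota e$ by descending $g$ to $\bar g\in\End M$, invoking locality of $\End M$ (with the replacement $g\mapsto\id_P-g$ to handle the case where $\bar g$ is not invertible), and then applying Nakayama's lemma to $g(P)+\ker p=P$ with $\ker p\subseteq\rad P$ to force $g$ to be an isomorphism, contradicting $g(\iota(B))=0$. Your proof is more elementary and self-contained --- it needs only that endomorphism rings of indecomposables are local plus Nakayama, rather than the uniqueness and additivity of minimal approximations --- and it produces a concrete nonsplit extension; the paper's proof is shorter once the approximation machinery is granted and makes transparent the underlying phenomenon that $\Ext^1(M,P)=0$ would force $M$ to decompose along the decomposition of $\om M$. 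Both use the projective-cover hypothesis essentially: you via $\ker p\subseteq\rad P$, the paper via the fact that the $f_i$ cannot be isomorphisms (a nonzero summand of $P$ cannot lie in $\rad P$).
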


\begin{proof}
We only prove (a); (b) is proved similarly. Assume towards a contradiction that $\om M=X_1\oplus X_2$ with $X_1\neq 0$ and $X_2\neq 0$ (in particular, $M$ is not projective) and $\Ext^1 (M,P)=0$. Consider the short exact sequence $0\rightarrow \om M \overset{\iota}{\rightarrow} P \overset{p}{\rightarrow} M \rightarrow 0$; by applying $\Hom( -, P)$ we get the long exact sequence 
\begin{equation*}
0 \longrightarrow \Hom(M, P) \overset{\_\circ p}{\longrightarrow} \Hom(P,P) \overset{\_\circ\iota}\longrightarrow \Hom(\om M, P) \longrightarrow \Ext^1(M,P) \longrightarrow \cdots.
\end{equation*}
By our assumption, $\Ext^1(M,P)=0$ so that $\_\circ \iota$ is surjective. Hence $\iota$ is a left $(\text{add}P)$-approximation. Moreover, it is left minimal for if $P_1 \oplus P_2$ is a direct sum decomposition of $P$ such that $\iota$ is isomorphic to $\om M\overset{\left(\begin{smallmatrix} \iota_1 \\ 0 \end{smallmatrix}\right)}{\longrightarrow} P_1\oplus P_2$, then $P_2$ is a direct summand of $M$, and since $M$ is not projective and indecomposable, $P_2=0$. Now let $f_1: X_1\rightarrow P'$ and $f_2: X_2\rightarrow P''$ be minimal left $(\text{add}P)$-approximations. Then $f_1\oplus f_2$ is a minimal left $(\text{add}P)$-approximation of $X_1\oplus X_2$, and therefore it is isomorphic to $\iota$ as a map. As $P$ is the projective cover of $M$, we have that $f_1$ and $f_2$ are both monomorphisms but not isomorphisms. Hence $\coker f_1 \neq 0$ and $\coker f_2\neq 0$. But then $M = \coker f_1 \oplus \coker f_2$ contradicts $M$ being indecomposable.
\end{proof}

We have two immediate corollaries.

\begin{corollary}
\label{used}
Let $\La$ be a finite-dimensional algebra. Then
\begin{itemize}
\item[(a)] If $M\in \mla$ is indecomposable nonprojective such that $\LS(M)=\RS(\tn M)$, then $\om^i M$ and $\om^{-i}\tn M$ are indecomposable for all $0<i<n$.
\item[(b)] If $N\in \mla$ is indecomposable noninjective such that $\RS(N)=\LS(\tno N)$, then $\om^{-i} N$ and $\om^i \tno N$ are indecomposable for all $0<i<n$.
\end{itemize}
\end{corollary}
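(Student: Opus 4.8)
Both statements of the corollary are dual to one another, so I would prove (a) and obtain (b) by interchanging projectives/syzygies with injectives/cosyzygies and using Proposition \ref{SO}(b) in place of (a). Inside (a) there are still two claims — one about $\om^i M$ and one about $\om^{-i}\tn M$ — and these are themselves dual, so the plan is to treat $\om^i M$ carefully and then indicate the cosyzygy version. For the $\om^i M$ claim I would argue by contradiction through a minimal counterexample: suppose $\om^i M$ is decomposable for some $0<i<n$ and let $i$ be minimal with this property. Then $\om^{i-1}M$ is indecomposable (it equals $M$ when $i=1$, and is indecomposable by minimality otherwise) and nonprojective (else $\om^i M=0$, which is not decomposable).

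Now I would apply Proposition \ref{SO}(a) to the indecomposable nonprojective module $\om^{i-1}M$, whose syzygy $\om(\om^{i-1}M)=\om^i M$ is decomposable. This yields $\Ext^1(\om^{i-1}M,P)\neq 0$, where $P$ is the projective cover of $\om^{i-1}M$. The next step is a dimension shift: applying $\Hom(-,P)$ to the successive syzygy sequences and using projectivity of the intermediate covers gives $\Ext^1(\om^{i-1}M,P)\cong \Ext^i(M,P)$, so $\Ext^i(M,P)\neq 0$ with $0<i<n$, i.e.\ $P\in\LS(M)$. I would then invoke the hypothesis $\LS(M)=\RS(\tn M)$ to conclude $P\in\RS(\tn M)$, meaning $\Ext^j(P,\tn M)\neq 0$ for some $0<j<n$. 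Since $P$ is projective, $\Ext^j(P,-)=0$ for all $j\geq 1$, a contradiction. Hence $\om^i M$ is indecomposable for all $0<i<n$.

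For $\om^{-i}\tn M$ I would run the same scheme dually: writing $N=\tn M$, a minimal decomposable $\om^{-i}N$ produces via Proposition \ref{SO}(b) a nonzero $\Ext^1(I,\om^{-(i-1)}N)$, with $I$ the injective hull of $\om^{-(i-1)}N$; the dual dimension shift (applying $\Hom(I,-)$ to the cosyzygy sequences) turns this into $\Ext^i(I,N)\neq 0$, so $I\in\RS(N)=\RS(\tn M)=\LS(M)$, forcing $\Ext^j(M,I)\neq 0$ for some $0<j<n$, which injectivity of $I$ rules out. The only genuinely delicate points — and the crux of the whole argument — are the bookkeeping in the dimension shifts, keeping every Ext-degree in the range $1\le\,\cdot\,\le i<n$ so that membership in $\LS$/$\RS$ is legitimate, together with the observation that a projective module can never lie in any $\RS(-)$ and an injective module can never lie in any $\LS(-)$. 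It is precisely this observation that converts the symmetry hypothesis $\LS(M)=\RS(\tn M)$ into the desired contradiction, which is why the result is an immediate consequence of Proposition \ref{SO}.
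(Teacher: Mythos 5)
Your argument for ruling out \emph{decomposable} syzygies and cosyzygies is exactly the paper's: take a minimal counterexample, apply Proposition \ref{SO} to the previous (indecomposable, nonprojective/noninjective) term, dimension-shift the resulting $\Ext^1$ up to $\Ext^i$ with $0<i<n$, and use the hypothesis $\LS(M)=\RS(\tn M)$ together with the fact that a projective module never lies in any $\RS(-)$ and an injective never lies in any $\LS(-)$. That part is fine.

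The gap is that you only prove the modules are \emph{not decomposable}; you never show they are \emph{nonzero}, which is part of what ``indecomposable'' asserts and is how the corollary is used later (e.g.\ in Lemma \ref{sameshadows} one needs $\om^{-i}\tn M\neq 0$ to conclude $\tno M\neq 0$). Your minimal-counterexample scheme is silent on this: if $\om^{-i}\tn M=0$ for some $0<i<n$, no decomposable module ever appears and your contradiction machine never starts. The paper spends roughly half of its proof on exactly this point. For the syzygies it is quick: $\LS(M)\neq\varnothing$ forces $\RS(\tn M)\neq\varnothing$, hence $\tn M\neq 0$ and so $\om^{n-1}M\neq 0$, whence $\om^iM\neq 0$ for all $0<i<n$ (and, as a byproduct, $\tn M$ is noninjective and indecomposable, which you also need before applying Proposition \ref{SO}(b) at the step $i=1$ of the dual argument). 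For the cosyzygies of $\tn M$ a genuine extra argument is required: if $i>1$ is minimal with $\om^{-i}\tn M=0$, then $\om^{-(i-1)}\tn M$ is a nonzero injective module, and Lemma \ref{remark}(iii$'$) gives $\Ext^{i-1}(\om^{-(i-1)}\tn M,\tn M)\cong\Ext^1(\om^{-(i-1)}\tn M,\om^{-(i-2)}\tn M)\neq 0$, so $\om^{-(i-1)}\tn M\in\RS(\tn M)=\LS(M)$ --- contradicting its injectivity. You should add these two nonvanishing arguments; with them your proof matches the paper's.
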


\begin{proof}
We only prove (a); (b) is proved similarly. Since $M$ is nonprojective, $\LS(M)\neq \varnothing$ by Lemma \ref{remark}(i). Then, by assumption, $\RS(\tn M)\neq \varnothing$ and so $\tn M$ is noninjective by Lemma \ref{remark}(i\ensuremath{'}). In particular, $\om^i M\neq 0$ for $0<i<n$. 

Let us now prove that $\om^{-i}\tn M\neq 0$ for $0<i<n$. Assume towards a contradiction the opposite for some $i$ minimal. In particular $1<i$, since $\tn M$ is noninjective. Then $\om^{-(i-1)}\tn M$ is injective and nonzero, so that $\om^{-(i-2)}\tn M$ is noninjective. By Lemma \ref{remark}(iii\ensuremath{'}), $\om^{-(i-1)}\tn M\in \RS(\tn M)=\LS(M)$, which contradicts $\om^{-(i-1)}\tn M$ being injective.

Next, assume towards a contradiction that $\om^i M$ is decomposable for some $0<i<n$ minimal. Then $\om^{i-1}M$ is indecomposable nonprojective, since $\om^i M\neq 0$. Let $P$ be the projective cover of $\om^{i-1}M$. By Proposition \ref{SO}(a), $\Ext^1(\om^{i-1}M,P)\neq 0$ and so $\Ext^i(M,P)\neq 0$. But then $P\in \LS(M)=\RS(\tn M)$, which contradicts $P$ being projective. Hence $\om^i M$ is indecomposable for $0<i<n$. Similarly, using Proposition \ref{SO}(b) we prove that $\om^{-i}\tn M$ is indecomposable for $0<i<n$.
\end{proof}

\begin{corollary}
\label{easy}
Let $\La$ be a finite dimensional algebra and $\cC$ be an $n$-cluster tilting subcategory of $\mla$. Then
\begin{itemize}
\item[(a)] $\om^i M$ is indecomposable for all $M\in \cC_P$ and $0<i<n$,
\item[(b)] $\om^{-i}N$ is indecomposable for all $N\in \cC_I$ and $0<i<n$.
\end{itemize}
\end{corollary}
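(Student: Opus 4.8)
The plan is to deduce both parts directly from Proposition \ref{SO} and the two defining identities $\cC=\Corth=\orthC$, treating (a) in detail and obtaining (b) by the dual argument; in particular I will not need Corollary \ref{used}. Fix $M\in\cC_P$. I first record that $\om^i M\neq 0$ for $0<i<n$: by Proposition \ref{isomorphism} we have $\tn M\in\cC_I$, so $\tn M=\tau(\om^{n-1}M)\neq 0$, which forces $\om^{n-1}M\neq 0$ and hence $\om^i M\neq 0$ for all $0\le i\le n-1$. Thus the assertion ``$\om^i M$ is indecomposable'' is a genuine claim for each $0<i<n$.

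I then argue by minimal counterexample. Suppose some $\om^i M$ with $0<i<n$ is decomposable and choose $i$ minimal. Then $\om^{i-1}M$ is indecomposable (by minimality, or because it equals $M$ when $i=1$) and nonprojective (since $\om^i M\neq 0$). Let $P$ be the projective cover of $\om^{i-1}M$. Because $\om(\om^{i-1}M)=\om^i M$ is decomposable, Proposition \ref{SO}(a) yields $\Ext^1(\om^{i-1}M,P)\neq 0$, and dimension shifting along the syzygy chain gives $\Ext^i(M,P)\cong\Ext^1(\om^{i-1}M,P)\neq 0$ with $0<i<n$.

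It remains to see that this is impossible. Since $\La\in\cC$ and $\cC$ is closed under direct sums and summands, every projective module lies in $\cC$; in particular $P\in\cC=\Corth$, so $\Ext^j(\cC,P)=0$ for all $0<j<n$. As $M\in\cC$ this forces $\Ext^i(M,P)=0$, contradicting the previous step. Hence no $\om^i M$ is decomposable and (a) follows. Part (b) is entirely dual: for $N\in\cC_I$ one uses $\tno N\in\cC_P$ to conclude $\om^{-i}N\neq 0$, replaces Proposition \ref{SO}(a) by Proposition \ref{SO}(b) and the projective cover by the injective hull $I$, shifts cosyzygies to obtain $\Ext^i(I,N)\neq 0$, and derives a contradiction from $I\in\cC=\orthC$ (using $D\La\in\cC$).

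The argument is short, and its only delicate points are bookkeeping rather than conceptual: one must verify that the syzygies are nonzero so that indecomposability is a meaningful claim, and one must set up the minimal counterexample so that Proposition \ref{SO} applies to an indecomposable nonprojective (respectively noninjective) module. The step I expect to carry the weight is the observation that projectives and injectives already belong to $\cC$, so that the nonvanishing $\Ext$ produced by Proposition \ref{SO} is immediately killed by $\cC=\Corth=\orthC$. Together (a) and (b) establish that conditions (3) and (4) of Theorem \ref{char} hold for every $n$-cluster tilting subcategory, which is precisely the necessity asserted in Remark \ref{uniqueness}(a).
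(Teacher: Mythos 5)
Your proof is correct and follows essentially the same route as the paper: a minimal decomposable syzygy, Proposition \ref{SO}(a) applied to the indecomposable nonprojective module $\om^{i-1}M$, dimension shifting to get $\Ext^i(M,P)\neq 0$, and the contradiction with $M,P\in\cC$. The only difference is that you spell out more explicitly why the syzygies are nonzero (via $\tn M\in\cC_I$), which the paper compresses into one clause.
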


\begin{proof}
We only prove (a); (b) is proved similarly. Assume the opposite and let $k$ be minimal such that $\om^k M$ is decomposable. Then $k\geq 1$ and $\om^{k-1}M$ is indecomposable. Moreover $\om^{k-1}M$ is not projective, since $\tn M\neq 0$ by Proposition \ref{isomorphism}. Let $P$ be the projective cover of $\om^{k-1} M$; by Proposition \ref{SO} we have that $\Ext^1(\om^{k-1}M, P)\neq 0$. But then $\Ext^k(M, P) \neq 0$ which contradicts $\cC$ being an $n$-cluster tilting subcategory, since $M,P\in \cC$.
\end{proof}

Corollary \ref{easy} gives a necessary condition for a subcategory $\cC$ to be $n$-cluster tilting: the syzygy and a cosyzygy of an indecomposable module in $\cC$ must be either indecomposable or $0$. In particular, we have now proved that if $\cC$ is an $n$-cluster tilting subcategory then (1)-(4) in Theorem \ref{char} hold, since (1) is immediate by the definition, (2) follows from Proposition \ref{isomorphism} and (3) and (4) from Corollary \ref{easy}. More generally, we have shown that conditions (1)-(4) being necessary is true for any finite-dimensional algebra, since we have not used representation-directedness yet.

In the rest of this section we will develop the tools needed for the reverse implication. From now on we will additionally assume that $\La$ is representation-directed. We begin with the following easy lemma.

\begin{lemma}
\label{Extpath}
Let $\La$ be a representation-directed algebra. 
\begin{itemize}
\item[(a)] Let $N,X\in\mla$ be indecomposable such that $X\in \RS(N)$ and $\tau^-N\not\cong X$. If $\om^{-i}N$ is indecomposable for all $0< i < n$, then $\tau^-N < X$ and $N<X$.

\item[(b)] Let $M,Y\in\mla$ be indecomposable such that $Y\in \LS(M)$ and $Y \not\cong \tau M$. If $\om^i M$ is indecomposable for all $0<i < n$, then $Y < \tau M$ and $Y< M$.  
\end{itemize}
\end{lemma}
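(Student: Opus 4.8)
We prove (a); part (b) is dual. Unpacking the hypothesis $X\in\RS(N)$, there is some $0<i<n$ with $\Ext^i(X,N)\neq 0$. In particular $N$ receives a nonzero higher extension, so $N$ is noninjective; hence $\tau^- N$ is a nonzero indecomposable module and Lemma~\ref{remark}(vii$'$) gives $N<\tau^- N$. The plan is to transfer the nonvanishing of $\Ext^i(X,N)$ into a nonzero homomorphism emanating from $\tau^- N$, and then read off the order relation via paths.

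The key step is to dimension-shift in the \emph{first} argument and then invoke the Auslander--Reiten formula. Using a projective resolution of $X$ one has $\Ext^i(X,N)\cong\Ext^1(\om^{i-1}X,N)$, and since this is nonzero we must have $\om^{i-1}X\neq 0$. The Auslander--Reiten formula then yields $\Ext^1(\om^{i-1}X,N)\cong\shom(\tau^- N,\om^{i-1}X)$, so that $\Hom(\tau^- N,\om^{i-1}X)\neq 0$. Choosing an indecomposable summand $Z$ of $\om^{i-1}X$ that receives a nonzero map from $\tau^- N$, we obtain $\tau^- N\leq Z$, and iterating Lemma~\ref{remark}(v) along the projective resolution shows $Z\leq X$. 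By transitivity $\tau^- N\leq X$, and the hypothesis $\tau^- N\not\cong X$ upgrades this to $\tau^- N< X$. Finally, chaining $N<\tau^- N<X$ and using that $<$ is a strict partial order on the indecomposables of a representation-directed algebra (Lemma~\ref{remark}(vi)) we conclude $N<X$.

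The only real subtlety is that $\om^{i-1}X$ may be decomposable, since we assume nothing about the syzygies of $X$; this is harmless because a nonzero map into a direct sum is nonzero into at least one summand, and Lemma~\ref{remark}(v) applies to each indecomposable summand separately. I expect this first-variable route to be the most efficient, and I note that it does not seem to use the hypothesis that the cosyzygies $\om^{-i}N$ are indecomposable. An alternative would be to dimension-shift in the \emph{second} argument, writing $\Ext^i(X,N)\cong\Ext^1\bigl(X,\om^{-(i-1)}N\bigr)$ with $\om^{-(i-1)}N$ indecomposable by that hypothesis; this produces $\tau^-\bigl(\om^{-(i-1)}N\bigr)\leq X$, but then reconnecting $\tau^- N$ to $\tau^-\bigl(\om^{-(i-1)}N\bigr)$ for $i\geq 2$ is the genuine obstacle, which is precisely why I favour shifting in the first variable.
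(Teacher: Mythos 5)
Your proof is correct, but it takes a genuinely different route from the paper's. The paper dimension-shifts in the \emph{second} argument: from $\Ext^j(X,N)\cong\Ext^1(X,\om^{-(j-1)}N)\cong\shom(\tau^-\om^{-(j-1)}N,X)$ it obtains $\tau^-\om^{-(j-1)}N\leq X$, and then reconnects this to $\tau^- N$ by walking along the cosyzygy chain
\[
N<\tau^- N\leq\om^- N\leq\cdots\leq\om^{-(j-1)}N\leq\tau^-\om^{-(j-1)}N\leq X,
\]
using Lemma \ref{remark}(v\ensuremath{'}) and (vii\ensuremath{'}) at each link; this is precisely where the hypothesis that the $\om^{-i}N$ are indecomposable is consumed. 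You instead shift in the \emph{first} argument, land directly on $\shom(\tau^- N,\om^{i-1}X)\neq 0$, and descend the projective resolution of $X$ via Lemma \ref{remark}(v), handling possible decomposability of $\om^{i-1}X$ by choosing a summand receiving a nonzero map (and using that $\om$ is additive when iterating (v)). Your observation that this route never uses the indecomposability of the cosyzygies of $N$ is accurate, so your argument in fact proves the marginally stronger statement in which that hypothesis is dropped; the hypothesis is harmless in the paper because the lemma is only invoked where Corollary \ref{used} has already supplied it. Both arguments are of comparable length, but yours produces a shorter chain of inequalities and isolates the one place where the Auslander--Reiten formula is really needed, namely $\Ext^1(\om^{i-1}X,N)\cong\shom(\tau^- N,\om^{i-1}X)$, which is legitimate for decomposable first argument by additivity of both sides.
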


\begin{proof}
We only prove (a); (b) is proved similarly. Since $\RS(N)\neq \varnothing$, $N$ is noninjective and $\Ext^1(\om^{-}N,N)\neq 0$. Then $\shom(\tau^- N, \om^-N)\neq 0$ by the Auslander-Reiten formula and since  $\om^-N$ is indecomposable, $\tau^-N \leq \om^- N$. Since $X\in \RS(N)$, there exists $0< j < n$ such that $\Ext^j(X,N)\neq 0$. Using dimension shift and the Auslander-Reiten formula we have
\[ \shom (\tau^- \om^{-(j-1)}N, X) \cong \Ext^1 (X, \om^{-(j-1)}N) \cong  \Ext^j(X,N) \neq 0.\]
Therefore $\om^{-(j-1)}N\leq\tau^-\om^{-(j-1)}N \leq X$. Since $\om^{-i}N$ is indecomposable for all $0< i < n$, we have $\om^{-k}N\leq \om^{-(k+1)}N$ for all $0\leq k\leq j-2$. Finally,
\[N< \tau^- N\leq \om^- N \leq \om^{-(j-1)}\leq X,\]
and so $\tau^-N<X$ as $\tau^- N\not\cong X$.
\end{proof}

If $\cC$ is an $n$-cluster subcategory of $\mla$, then $X\not\in \cC$ implies that there exist $0<i,j<n-1$ and $M,N\in\cC$ such that $\Ext^i(M,X)\neq 0$ and $\Ext^j(X,N)\neq 0$. Note that there is no obvious connection between $M$ and $N$. We will soon prove that in the case of representation-directed algebras, one can always choose $M$ and $N$ above so that $M=\tn N$ and $N=\tno M$ and $X\in \RS(M)=\LS(N)$. To this end, we need to investigate the properties of the left and right support of a module and its $n$-Auslander-Reiten translations. The following lemma is the start of our investigation in that direction.

\begin{lemma}
\label{sameshadows}
Let $\La$ be a representation-directed algebra.
\begin{itemize}
\item[(a)] Let $N\in\mla$ be indecomposable noninjective. Then $\RS(N)=\LS(\tno N)$ implies $\tn\tno N = N$.

\item[(b)] Let $M\in\mla$ be indecomposable nonprojective. Then $\LS(M)=\RS(\tn M) $ implies $\tno\tn M= M$.
\end{itemize}
\end{lemma}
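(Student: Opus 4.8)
We are given an indecomposable noninjective module $N$ satisfying $\RS(N)=\LS(\tno N)$, and we must show $\tn\tno N = N$. First I would unwind the definition of the $n$-Auslander-Reiten translations: by definition $\tno N = \tau^-(\om^{-(n-1)}N)$ and $\tn(\tno N) = \tau(\om^{n-1}\tno N)$. The strategy is to relate $\om^{n-1}\tno N$ back to $N$ by establishing that the cosyzygies of $N$ and the syzygies of $\tno N$ are ``inverse'' to each other along the almost split structure. The key leverage is \emph{Corollary \ref{used}(b)}: since $N$ is indecomposable noninjective and $\RS(N)=\LS(\tno N)$, we get that $\om^{-i}N$ and $\om^i\tno N$ are indecomposable (hence nonzero) for all $0<i<n$. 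This removes the main technical nuisance, namely the possibility of modules splitting or vanishing prematurely, and lets me work with genuine almost split sequences at every step.

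With indecomposability secured, I would track the chain of almost split sequences. Writing $Z = \om^{-(n-1)}N$, we have $\tno N = \tau^- Z$, so there is an almost split sequence $0\to Z \to F \to \tau^- Z\to 0$ and hence $\tau(\tau^- Z)=Z$ provided $Z$ is noninjective. The plan is to verify that $Z=\om^{-(n-1)}N$ is indeed noninjective, which follows from Corollary \ref{used}(b) pushed to the boundary (the argument there shows the cosyzygies stay noninjective up through index $n-1$). Thus $\tau\tno N = \om^{-(n-1)}N$. It then remains to show that $\om^{n-1}(\tno N)$ recovers $\om^{-(n-1)}N$ ``backwards'' so that $\tau$ of it is $N$; equivalently, I want to show that applying $\om^{n-1}$ to $\tno N$ and then $\tau$ inverts the composite $\tau^-\circ\om^{-(n-1)}$ applied to $N$. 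The natural mechanism is that cosyzygy and syzygy are mutually inverse on the relevant indecomposables here: since each $\om^{-i}N$ is indecomposable and noninjective for $0\le i<n-1$, taking one cosyzygy and then one syzygy returns the same module, and iterating $n-1$ times gives $\om^{n-1}\om^{-(n-1)}N \cong N$ up to the non-projective part. Combining these, $\tn\tno N = \tau(\om^{n-1}\tau^-\om^{-(n-1)}N)$, and I would collapse the inner $\tau^-$ against one $\tau$ and the $\om^{n-1}$ against $\om^{-(n-1)}$ to land on $N$.

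\textbf{The main obstacle} I anticipate is controlling the interaction between $\om$, $\om^-$, $\tau$ and $\tau^-$ when composing them, since these operations are only inverse to one another on suitable classes of indecomposables (nonprojective/noninjective ones), and boundary cases can break the cancellation. Specifically, I must be careful that at each application the relevant module is nonprojective (so $\om^-\om = \id$ holds on it) and noninjective (so $\om\,\om^- = \id$ holds), and that $\tau^- Z$ is nonprojective so that $\tau\tau^- Z = Z$. This is exactly where the hypothesis $\RS(N)=\LS(\tno N)$ does the real work, via Corollary \ref{used}(b) and Lemma \ref{remark}: the support equality forces the whole chain of intermediate modules to avoid the degenerate projective/injective cases, so every cancellation is licensed. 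I would therefore structure the proof as: (i) invoke Corollary \ref{used}(b) to get indecomposability of all intermediate syzygies and cosyzygies; (ii) check the endpoints are noninjective (resp.\ the reflected module nonprojective) so the outer $\tau,\tau^-$ cancel; (iii) perform the step-by-step cancellation $\om^{n-1}\om^{-(n-1)}N\cong N$ and $\tau\tau^-=\id$ on the indecomposables at hand; and conclude $\tn\tno N = N$.
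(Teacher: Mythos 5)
There is a genuine gap at the heart of your argument: the final ``collapse'' step. The composite you must evaluate is
\[
\tn\tno N \;=\; \tau\bigl(\om^{n-1}\bigl(\tau^{-}\bigl(\om^{-(n-1)}N\bigr)\bigr)\bigr),
\]
in which the four operations occur in the interleaved order $\tau,\ \om^{n-1},\ \tau^{-},\ \om^{-(n-1)}$. You propose to ``collapse the inner $\tau^-$ against one $\tau$ and the $\om^{n-1}$ against $\om^{-(n-1)}$'', but no such pairing is available: the syzygy functor does not commute with $\tau^{-}$, so $\om^{n-1}\tau^{-}Z$ cannot be rewritten as $\tau^{-}\om^{n-1}Z$ (nor is there any identity letting $\tau$ reach past $\om^{n-1}$ to cancel the $\tau^{-}$). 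The claim that $\om^{n-1}$ applied to $\tno N=\tau^{-}\om^{-(n-1)}N$ ``traces back'' the cosyzygies of $N$ is exactly the content of the lemma, not a formal consequence of $\om\,\om^{-}=\id$ and $\tau\tau^{-}=\id$; indeed even $\om\,\om^{-}X\cong X$ is not automatic for an indecomposable noninjective $X$ (the injective hull of $X$ need not be the projective cover of $\om^{-}X$). A telling symptom is that your argument never uses representation-directedness beyond Corollary \ref{used}, which holds for arbitrary finite-dimensional algebras — yet the lemma is a representation-directed statement, and for general algebras the analogous assertion is the nontrivial Theorem 2.8 of Iyama, requiring the full $n$-cluster tilting hypothesis.

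The paper closes this gap as follows. Since $\tau\tau^{-}N\cong N$ for $N$ indecomposable noninjective, it suffices to prove $\om^{n-1}\tno N\cong\tau^{-}N$, and both modules are indecomposable (Corollary \ref{used}). One then shows $\tau^{-}N\leq\om^{n-1}\tno N$ and $\om^{n-1}\tno N\leq\tau^{-}N$ and invokes antisymmetry of $\leq$ for representation-directed algebras (Lemma \ref{remark}(vi)). The first inequality comes from $\om^{n-1}\tno N\in\LS(\tno N)=\RS(N)$ together with Lemma \ref{Extpath}; the second from $\tau^{-}N\in\RS(N)=\LS(\tno N)$, which gives $\Ext^{j}(\tno N,\tau^{-}N)\neq 0$ for some $j$, hence $\Hom(\om^{j}\tno N,\tau^{-}N)\neq 0$ and a chain $\om^{n-1}\tno N\leq\om^{j}\tno N\leq\tau^{-}N$. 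This is where the support equality $\RS(N)=\LS(\tno N)$ and representation-directedness do their real work; your outline would need to be rebuilt around an argument of this kind rather than around cancellation of the translations.
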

 
\begin{proof}
We only prove (a); (b) is proved similarly. As $N$ is noninjective, $\RS(N)\neq \varnothing$ and since $\LS(\tno N)\neq \varnothing$, $\tno N\neq 0$. Let $0<i<n$. First note that $\om^{-i} N$ and $\om^{i}\tno N$ are indecomposable by Corollary \ref{used}. In particular, $\tno N$ is indecomposable as well.

Therefore, it is enough to show that $\Omega^{n-1}\tno N \cong \tau^- N$. We will show this by showing $\tau^- N \leq \Omega^{n-1}\tno N$ and $\Omega^{n-1}\tno N \leq \tau^- N$ . 

Since $\om^{n-1}\tno N\neq 0$, we have $\Omega^{n-1}\tno N \in \LS(\tno N)=\RS(N)$ so by Lemma \ref{Extpath} we have $\tau^-N \leq \Omega^{n-1}\tno N$. 

Now, since $\tau^-N \in \RS(N) = \LS(\tno N)$ there exists some $j$ such that $\Ext^j(\tno N, \tau^- N)\neq 0$. In particular $\Hom (\Omega^{j}\tno N, \tau^-N)\neq 0$ so
\[\om^{n-1}\tno N \leq \om^j \tno N \leq \tau^- N,\]
which finishes the proof.
\end{proof}

The following technical lemma will be needed for the proof of the main proposition of this section.

\begin{lemma}
\label{basecase}
Let $\La$ be a representation-directed algebra.
\begin{itemize}
\item[(a)] Let $N\in\mla$ be indecomposable with $\om^- N$ indecomposable. Then $\LSt(\tau_2^- N) \subseteq \RSt(N)$. 

\item[(b)] Let $M\in\mla$ be indecomposable with $\om M$ indecomposable. Then $\RSt(\tau_2 M) \subseteq \LSt(M)$.
\end{itemize}
\end{lemma}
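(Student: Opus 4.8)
The plan is to establish (a) by computing directly the two $\Ext^1$-groups that enter the inclusion, translating each into a stable-Hom group via the Auslander-Reiten formula and then relating these through the cosyzygy sequence; part (b) will follow by the dual argument. Observe first that I may assume $\tau_2^- N \neq 0$, since otherwise $\LSt(\tau_2^- N) = \varnothing$ and the inclusion is vacuous. Writing $\tau_2^- N = \tau^-(\om^- N)$, the assumption $\tau_2^- N \neq 0$ forces $\om^- N$ to be noninjective, and it is indecomposable by hypothesis; hence $\tau^-(\om^- N)$ is indecomposable nonprojective and $\tau\tau^-(\om^- N) \cong \om^- N$.

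For the first step I fix $Y \in \LSt(\tau_2^- N)$, that is $\Ext^1(\tau_2^- N, Y) \neq 0$, and rewrite this group. Applying the Auslander-Reiten formula $\Ext^1(A,B) \cong \coshom(B, \tau A)$ with $A = \tau_2^- N$ and $B = Y$, together with $\tau\tau^-(\om^- N) \cong \om^- N$, yields
\[\Ext^1(\tau_2^- N, Y) \cong \coshom\!\big(Y, \tau\tau^-(\om^- N)\big) \cong \coshom(Y, \om^- N).\]
Since $\coshom = D\overline{\text{Hom}}$ and the duality $D$ preserves nonvanishing, the hypothesis becomes $\overline{\text{Hom}}(Y, \om^- N) \neq 0$.

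The second step transports this from $\om^- N$ back to $N$. I apply $\Hom(Y, -)$ to the cosyzygy sequence $0 \to N \to I \to \om^- N \to 0$, where $I$ is the injective hull of $N$. Because $\Ext^1(Y, I) = 0$, the connecting homomorphism gives a surjection of $\Hom(Y, \om^- N)$ onto $\Ext^1(Y, N)$ whose kernel is the image of $\Hom(Y, I) \to \Hom(Y, \om^- N)$; every map in that image factors through the injective $I$, so it lies among the injectively trivial maps, and passing to the further quotient produces a surjection $\Ext^1(Y, N) \twoheadrightarrow \overline{\text{Hom}}(Y, \om^- N)$. Combining the two steps, $\overline{\text{Hom}}(Y, \om^- N) \neq 0$ forces $\Ext^1(Y, N) \neq 0$, i.e. $Y \in \RSt(N)$, which is the desired inclusion. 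Part (b) is entirely dual: one uses $\Ext^1(A,B) \cong \shom(\tau^- B, A)$, the identity $\tau^-\tau(\om M) \cong \om M$ (valid since $\tau_2 M \neq 0$ makes $\om M$ nonprojective), and the projective-cover sequence $0 \to \om M \to P \to M \to 0$ to obtain a surjection $\Ext^1(M, Y) \twoheadrightarrow \underline{\text{Hom}}(\om M, Y)$.

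The step to watch carefully is the second one: the canonical map $\Ext^1(Y, N) \to \overline{\text{Hom}}(Y, \om^- N)$ is in general only surjective and not an isomorphism, since an injectively trivial map $Y \to \om^- N$ need not factor through the chosen injective hull $I$; it is therefore essential that I only need to propagate nonvanishing from the quotient back to $\Ext^1(Y,N)$, which the surjection does provide. I would also flag that representation-directedness is not actually used anywhere in this argument, so the lemma in fact holds for an arbitrary finite-dimensional algebra $\La$.
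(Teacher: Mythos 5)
Your proof is correct and is essentially the paper's own argument: both reduce to the case where $\om^- N$ is noninjective so that $\tau\tau_2^- N \cong \om^- N$, convert $\Ext^1(\tau_2^- N, Y)\neq 0$ into $\coshom(Y,\om^- N)\neq 0$ via the Auslander--Reiten formula, and then use the long exact sequence of $\Hom(Y,-)$ applied to $0\to N\to I\to \om^- N\to 0$ to conclude $\Ext^1(Y,N)\neq 0$ (the paper runs this last step as a proof by contradiction, you run it directly via the surjection $\Ext^1(Y,N)\twoheadrightarrow \coshom(Y,\om^- N)$, and you correctly flag that only surjectivity is needed). Your side remark that representation-directedness is never used is also accurate; the paper likewise does not invoke it in this proof.
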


\begin{proof}
We only prove (a); (b) is proved similarly. If $\om^-N$ is injective, the result holds trivially since $\LSt(\tau_2^- N)=\varnothing$. Assume that $\om^-N$ is noninjective. Then, since it is indecomposable, we have
\begin{equation}
\label{taus}
\tau\tau_2^- N = \tau\tau^- \om^- N \cong\om^- N.
\end{equation} 

Let $X\in \LSt(\tau_2^- N)$ so that $\Ext^1(\tau_2^- N, X)\neq 0$. We will show that $X\in \RSt(N)$ or $\Ext^1(X, N)\neq 0$. Let $I_N$ be the injective hull of $N$; then applying $\Hom(X,-)$ to the short exact sequence $0\rightarrow N \rightarrow I_N \rightarrow \om^- N \rightarrow 0$ gives rise to the long exact sequence
\[0\rightarrow \Hom (X,N) \rightarrow \Hom(X,I_N) \rightarrow \Hom(X, \om^- N) \rightarrow \Ext^1 (X,N) \rightarrow  \cdots .\]
Assuming towards a contradiction that $\Ext^1 (X,N) = 0$, implies that the map $\Hom(X,I_N) \rightarrow \Hom(X, \om^- N)$ is surjective. In particular, $\coshom (X, \om^- N) = 0$ (since every homomorphism from $X$ to $\om^- N$ factors through $I_N$). But then using the Auslander-Reiten formula and (\ref{taus}) we have  
\[ 0 \neq \Ext^1(\tau_2^-N, X) \cong  \coshom (X, \tau\tau_2^- N) \cong \coshom (X, \om^-N)  = 0,\]
which is a contradiction. Therefore, $\Ext^1 (X,N) \neq 0$.
\end{proof}

With this we are ready to prove the next proposition which will be an important tool in the proof of the main result.

\begin{proposition}
\label{injective}
Let $\La$ be a representation-directed algebra.
\begin{itemize}
\item[(a)] Let $N\in\mla$ be indecomposable noninjective. Then if $\RS(N)\neq \LS(\tno N)$ there exists an indecomposable injective module $I$ such that $I\in \RS(N)$

\item[(b)] Let $M\in\mla$ be indecomposable nonprojective. Then if $\LS(M) \neq \RS(\tn M) $ there exists an indecomposable projective module $P$ such that $P\in \LS(M)$.
\end{itemize}
\end{proposition}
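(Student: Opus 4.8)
I will prove (a); (b) is dual, and throughout I argue the contrapositive: \emph{assuming no indecomposable injective module lies in $\RS(N)$, I will show $\RS(N)=\LS(\tno N)$.} First I would record that this hypothesis forces the cosyzygies of $N$ to be well-behaved, namely that $\om^{-i}N$ is indecomposable and noninjective for every $0\le i\le n-1$. This follows by a short induction on $i$: granted $N,\om^-N,\dots,\om^{-(i-1)}N$ are indecomposable and noninjective, Lemma~\ref{remark}(iii$'$) places $\om^{-i}N$ in $\RS(N)$, so $\om^{-i}N$ cannot be injective (an injective summand would be an indecomposable injective in $\RS(N)$); and if $\om^{-i}N$ were decomposable, then Proposition~\ref{SO}(b) applied to $\om^{-(i-1)}N$, together with a dimension shift, would place the injective hull of $\om^{-(i-1)}N$ in $\RS(N)$, again a contradiction. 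In particular $\tno N=\tau^-\om^{-(n-1)}N$ is a well-defined nonzero indecomposable module. I would then proceed by induction on $n$, the base case $n=2$ being the heart of the matter.

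For $n=2$ the inclusion $\LSt(\tau_2^-N)\subseteq\RSt(N)$ is exactly Lemma~\ref{basecase}(a), so the real work is the reverse inclusion. Given $Y$ with $\Ext^1(Y,N)\neq 0$, the connecting homomorphism $\Hom(Y,\om^- N)\to\Ext^1(Y,N)$ of the injective-hull sequence $0\to N\to I_N\to\om^- N\to 0$ is surjective, so I can choose $g\colon Y\to\om^- N$ whose image is nonzero. If $\coshom(Y,\om^- N)=0$, then in particular $g$ factors as $g=b\circ a$ with $a\colon Y\to J$ and $b\colon J\to\om^- N$ for some injective $J$; by naturality of the connecting homomorphism in its first argument, the nonzero image of $g$ is the image under $\Ext^1(a,N)$ of the connecting image of $b$, forcing $\Ext^1(J,N)\neq 0$ and hence an indecomposable injective summand of $J$ in $\RSt(N)\subseteq\RS(N)$, contradicting the hypothesis. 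Therefore $\coshom(Y,\om^- N)\neq 0$; since $\tau\tau_2^-N\cong\om^- N$, the Auslander--Reiten formula gives $\coshom(Y,\om^- N)\cong\Ext^1(\tau_2^- N,Y)$, so $Y\in\LSt(\tau_2^- N)$ and the base case is done.

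For the inductive step ($n\ge 3$) I would split both sides into a ``bottom'' and a ``top'' piece by dimension shifting. On the right the injective-hull sequence gives $\RS(N)=\RSt(N)\cup\RSk_{n-1}(\om^- N)$, and on the left $\LS(\tno N)=\LSk_{n-1}(\tno N)\cup E$ where $E=\{Y\mid\Ext^{n-1}(\tno N,Y)\neq 0\}$. Since $\RSk_{n-1}(\om^- N)\subseteq\RS(N)$ contains no indecomposable injective and $\om^- N$ is indecomposable noninjective, the induction hypothesis applies to $\om^- N$ and, using $\tau_{n-1}^-\om^- N=\tno N$, yields $\RSk_{n-1}(\om^- N)=\LSk_{n-1}(\tno N)$. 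It then remains to identify the top pieces, i.e.\ to prove $E=\RSt(N)$. Combining a dimension shift with the Auslander--Reiten formula gives $\Ext^{n-1}(\tno N,Y)\cong\coshom(Y,\tau_{n-1}\tno N)$; the induction hypothesis together with Lemma~\ref{sameshadows}(a) applied to $\om^- N$ gives $\tau_{n-1}\tno N=\tau_{n-1}\tau_{n-1}^-\om^- N=\om^- N$, whence $\Ext^{n-1}(\tno N,Y)\cong\coshom(Y,\om^- N)\cong\Ext^1(\tau_2^- N,Y)$ and so $E=\LSt(\tau_2^- N)$, which equals $\RSt(N)$ by the base case. Substituting, both $\RS(N)$ and $\LS(\tno N)$ equal $\RSt(N)\cup\LSk_{n-1}(\tno N)$, proving the equality.

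The step I expect to be the main obstacle is the reverse inclusion in the base case. One cannot establish it directly from the Auslander--Reiten formula, because $\coshom(Y,\om^- N)$ is only a \emph{quotient} of $\Ext^1(Y,N)$, and it is precisely the possible vanishing of this quotient that must be converted into the existence of an injective in $\RS(N)$. The factorization-through-an-injective argument above performs exactly this conversion, and it is the one place where the hypothesis is used in an essential, non-formal way; once the base case is in hand, the inductive step is bookkeeping with dimension shifts, the inductive hypothesis, and Lemma~\ref{sameshadows}.
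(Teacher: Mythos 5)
Your proof is correct. The overall architecture matches the paper's: the same preliminary reduction (ruling out decomposable or injective cosyzygies via Proposition \ref{SO}(b) and Lemma \ref{remark}(iii$'$), so that $\tno N$ is a nonzero indecomposable), and the same base case, where a class in $\Ext^1(Y,N)$ killed in $\coshom(Y,\om^-N)$ is converted into an injective $J$ with $\Ext^1(J,N)\neq 0$ by factoring a map $Y\to\om^-N$ through $J$ and pushing along the connecting homomorphism -- your naturality argument is exactly the paper's observation that the resulting map $J\to\om^-N$ cannot factor through $I_N$. Where you genuinely diverge is the inductive step. The paper splits off the \emph{top} degree, writing $\RS(N)=\RSk_{n-1}(N)\cup\RSt(\om^{-(n-2)}N)$ and $\LS(\tno N)=\LSt(\tno N)\cup\LSk_{n-1}(\om\tno N)$, and applies the inductive hypothesis to $N$ itself at level $n-1$; this forces it to prove $\tau_{n-1}^-N\cong\om\tno N$, which costs a separate argument (indecomposability of $\om\tno N$ via Proposition \ref{SO}, then a two-sided comparison in the order $\leq$). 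You split off the \emph{bottom} degree, writing $\RS(N)=\RSt(N)\cup\RSk_{n-1}(\om^-N)$, and apply the inductive hypothesis to $\om^-N$, for which the identification $\tau_{n-1}^-\om^-N=\tno N$ is definitional; the price is matching the degree-$(n-1)$ pieces, which you settle with $\tau_{n-1}\tno N\cong\om^-N$ from Lemma \ref{sameshadows} at level $n-1$ (legitimate, since that lemma and its proof are uniform in $n$). Your route avoids the order-theoretic identification of $\om\tno N$ entirely and is somewhat leaner; the paper's keeps the inductive hypothesis anchored at $N$ at the cost of that extra identification. Both are sound.
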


\begin{proof}
We only prove (a); (b) is proved similarly. Let us first assume that $\om^{-i}N$ is decomposable for some $0<i<n$. Pick $m$ minimal such that $\om^{-m}N$ is decomposable and let $I$ be the injective hull of $\om^{-(m-1)}N$. If $\om^{-(m-1)}N$ is injective, then $\om^{-(m-1)}N\in \RS(N)$. Otherwise by Proposition \ref{SO}, $\Ext^1(I, \om^{-(m-1)}N)\neq 0$ and $\Ext^{m}(I,N)\neq 0$, which implies $I\in \RS(N)$.

Assume now that $\om^{-i} N$ is indecomposable for all $0< i<n$. If $\tno N=0$, then $\om^{-(n-1)} N\in \RS(N)$ is injective so we can assume further that $\tno N\neq 0$ and hence $\tno N$ is indecomposable.

We will prove this remaining case using induction on $n\geq 2$. If $n=2$, we have $\LSt(\tau_2^- N) \subseteq \RSt(N)$ by Lemma \ref{basecase}. Since $\RSt(N)\neq \LSt(\tau_2^- N)$, there exists some $X\in \RSt(N)$ such that $X\not\in \LSt(\tau_2^- N)$. Since $\om^- N$ is indecomposable, we have
\[\coshom (X,\om^- N) \cong \Ext^1(\tau_2^- N, X)=0.\]
On the other hand, let $I_N$ be the injective hull of $N$ and consider the short exact sequence $0\rightarrow N \rightarrow I_N \overset{p}{\rightarrow} \om^- N\rightarrow 0$. Then we also have that
\[ \Hom (X, \om^-N) / \text{Im}(p\circ\_)\cong \Ext^1(X,N)  \neq 0.\]
where $p\circ\_:\Hom(X, I_N) \rightarrow \Hom(X, \om^-N)$. Therefore, there exists a nonzero homomorphism $f:X\rightarrow \Omega^-N$ that doesn't factor through $I_N$. But since $\coshom (X,\Omega^- N)=0$, $f$ factors through an injective $I'$ different than $I_N$. In particular there are nonzero homomorphisms $g:X\rightarrow I'$ and $h:I' \rightarrow \Omega^-N$ such that $f=hg$. Note that $h$ does not factor through $I_N$ since otherwise $f$ would also factor through $I_N$. Therefore, $h\in\Hom (I', \Omega^-N) \setminus \text{Im}(p\circ\_)$ and we have that $\Ext^1(I',  N)\neq 0$. In particular, $I' \in \RSt(N)$ and we are done with the base step.

For the induction step, assume that (a) holds for all $k<n$. We will prove (a) for $n$. Let $N\in\mla$ be indecomposable with $\tno N$ and $\om^{-i}N$ indecomposable for $0<i<n$ and $\RS(N) \neq \LS(\tno N)$. If $\RSt(\om^{-(n-2)}N)\neq \LSt(\tno N)$, then by the base case there exists some indecomposable injective module $I\in \RSt(\om^{-(n-2)}N)$. Then we have 
\[ \Ext^{n-1}(I,N) = \Ext^1(I, \om^{-(n-2)}N) \neq 0,\]
so $I\in \RS(N)$ and we are done. It remains to prove the result when $\RSt(\om^{-(n-2)}N) = \LSt(\tno N)$.
First note that 
\begin{equation*}
\begin{split}
\RS(N) &= \{Y\in \mla \mid \exists \; 0<i<n : \Ext^i(Y,N) \neq 0\} \\
&= \{Y\in \mla \mid \exists \; 0<i<n-1 : \Ext^i(Y,N) \neq 0\} \\
&\;\;\;\;\cup \{Y\in \mla \mid    \Ext^{n-1}(Y,N) \neq 0\} \\
&= \RSk_{n-1}(N) \cup \{Y\in \mla \mid  \Ext^{1}(Y,\om^{-(n-2)} N) \neq 0\} \\
&= \RSk_{n-1}(N) \cup \RSt(\om^{-(n-2)} N),
\end{split}
\end{equation*}
and similarly,
\[ \LS(\tno N) = \LSt(\tno N) \cup \LSk_{n-1}(\om\tno N).\]
Since $\RS(N)\neq \LS(\tno N)$ but $\RSt(\om^{-(n-2)}N) = \LSt(\tno N)$, we have that $\RSk_{n-1}(N)\neq \LSk_{n-1}(\om\tno N)$. So it is enough to prove that $\tau_{n-1}^-N = \om\tno N$, since then by induction assumption there exists some indecomposable injective module $I\in\RSk_{n-1}(N)\subseteq \RS(N)$.

Let us first prove that $\om \tno N$ is indecomposable. First note that $\tno N$ is indecomposable and not projective. If $\om\tno N$ is decomposable, then there exists a projective module $P$ with $\Ext^1(\tno N, P)\neq 0$ by Proposition \ref{SO}. But then $P\in \LSt(\tno N)=\RSt(\om^{-(n-2)}N)$, so that $\Ext^1(P, \om^{-(n-2)}N)\neq 0$, a contradiction. 

Hence, to show $\om\tno N \cong \tau_{n-1}^- N =\tau^-\om^{-(n-2)}N$ it is enough to show $\om\tno N \leq \tau^- \om^{-(n-2)}N$ and $\tau^- \om^{-(n-2)}N \leq \om \tno N$, since both modules are indecomposable. We have
\[\om\tno N \in \LSt(\tno N) = \RSt(\om^{-(n-2)}N) \]
and so $\Ext^1(\om\tno N, \om^{-(n-2)}N)\neq 0$. By the Auslander-Reiten formula, 
\[\shom(\tau^-\om^{-(n-2)} N, \om\tno N)\neq 0,\]
which implies $\tau^- \om^{-(n-2)}N \leq \om \tno N$. On the other hand, since $\om^{-(n-1)}N$ is indecomposable, $\om^{-(n-2)}N$ is noninjective, so
\[\tau^- \om^{-(n-2)}N \in \RSt(\om^{-(n-2)}N) = \LSt(\tno N)\]
and $\Ext^1(\tno N, \tau^-\om^{-(n-2)}N)\neq 0$. In particular $\Hom(\om \tno N,\tau^- \om^{-(n-2)}N)\neq 0$ and so $\om\tno N \leq \tau^- \om^{-(n-2)}N$ which finishes the proof.
\end{proof}

\subsection{Proof of Theorem 1}

With the preparation from the previous section, we can give a proof of the following more general form of Theorem \ref{char}.

\begin{TheoreM}{1}
\label{main1}
Let $\La$ be a representation-directed algebra and $\cC$ be a subcategory of $\mla$. Then the following are equivalent.
\begin{itemize}
\item[(a)] \begin{itemize}
\item[(1)] $\La\in \cC$,
\item[(2)] $\tn$ and $\tno$ induce mutually inverse bijections
\[
\begin{tikzpicture}
\node (0) at (0,0) {$\cC_P$};
\node (1) at (2,0) {$\cC_I$,};

\draw[-latex] (0) to [bend left=20] node [above] {$\tn$} (1);
\draw[-latex] (1) to [bend left=20] node [below] {$\tno$} (0);
\end{tikzpicture}
\]

\item[(3)] $\om^i M$ is indecomposable for all $M\in \cC_P$ and $0<i<n$,
\item[(4)] $\om^{-i}N$ is indecomposable for all $N\in \cC_I$ and $0<i<n$.
\end{itemize}

\item[(b)] \begin{itemize}
\item[(1)] $\La\in \cC$,
\item[(2)] for all $M\in \cC_P$, $\tn M\in \cC$ and $\LS(M)=\RS(\tn M)$,
\item[(3)] for all $N\in \cC_I$, $\tno N\in \cC$ and $\RS(N)=\LS(\tno N)$.
\end{itemize}
\item[(c)] $\cC$ is an $n$-cluster tilting subcategory.
\end{itemize}
\end{TheoreM}

\begin{proof}
First note that as we mentioned before we have already proved (c) implies (a) by Corollary \ref{easy} and Proposition \ref{isomorphism}. Next note that (b2) implies (a3) and (b3) implies (a4) by Corollary \ref{used}. Moreover (b2) and (b3) imply (a2) by Lemma \ref{sameshadows}. This shows that (b) implies (a). Next we will prove (a) implies (b) and finally (a) and (b) imply (c).

(a) implies (b): We only prove (a) implies (b3); (a) implies (b2) is similar. First note that (a1) and (a2) imply $D\La\in \cC$ since $\La$ is representation-directed and so representation-finite. If $N\in \cC_I$, then $\tno N\in \cC_P$ by (a2), so it remains to show that $\RS(N)=\LS(\tno N)$. Assume instead that there exists some $N\in \cC_I$ such that $\RS(N) \neq \LS(\tno N)$ and we will reach a contradiction. By Proposition \ref{injective}(a) there exists an injective indecomposable module $I\in \RS(N)$. More generally, there is a sequence $X_k$, $k\geq -1$, satisfying:
\begin{itemize}
\item[$\bullet$] $X_{-1} = \tno N$ ,
\item[$\bullet$] $X_0 = I$,
\item[$\bullet$] $X_k =\tn X_{k-2} \text{ if  $\RS (\tn X_{k-1}) = \LS(X_{k-1})$}$,
\item[$\bullet$] $X_k\in\RS(\tn X_{k-1}) \text{ and } X_k$ indecomposable injective if $\RS(\tn X_{k-1}) \neq \LS(X_{k-1})$ . 
\end{itemize}
In particular, $X_k\in \cC$ for all $k\geq -1$. We claim that $X_k\in \RS(\tn X_{k-1})$ for all $k\geq 0$. We will prove this by induction. For  $k=0$ we have 
\[X_0=I\in \RS(N)\overset{(a2)}{=}\RS(\tn\tno N)=\RS(\tn X_{-1}).\] 
For the induction step, assume that $X_k\in \RS(\tn X_{k-1})$. We want to prove $X_{k+1}\in \RS(\tn X_k)$. If $\RS(\tn X_k) \neq \LS(X_k)$, then $X_{k+1}$ is an indecomposable injective module in $\RS(\tn X_k)$ by construction. Otherwise, $\RS(\tn X_k) = \LS(X_k)$ and $X_{k+1}=\tn X_{k-1}$. By induction assumption we have $X_{k} \in \RS(\tn X_{k-1})$ and so
\[X_{k+1}= \tn X_{k-1} \in \LS(X_k) = \RS(\tn X_k) \]
as required. In particular, $X_k\in\cC_P$ for all $k$.

Next we use $X_k\in \RS(\tn X_{k-1})$ to show $X_k < X_{k-1}$. Since $X_k\in \RS(\tn X_{k-1})$, there exists some $0<i<n$ with $\Ext^i(X_k, \tn X_{k-1})\neq 0$. In particular, $\Hom(X_k, \Omega^{-i} \tn X_{k-1}) \neq 0$. Since $\tn X_{k-1}\in \cC_I$, we have that $\Omega^{-i}\tn X_{k-1}$ is indecomposable by (a4) and so $X_k \leq \Omega^{-i}\tn X_{k-1}$. Since $\om^{-j}\tn X_{k-1}$ is indecomposable for $i<j<n$, 
\[X_k \leq \Omega^{-i}\tn X_{k-1} \leq \Omega^{-(n-1)} \tn X_{k-1} < \tau^- \Omega^{-(n-1)} \tn X_{k-1}.\]
Since by (a2) we have $\tn^-\tn X_{k-1}=X_{k-1}$, we get $X_k < X_{k-1}$. So, the sequence $X_k$ is an infinite sequence of indecomposable modules such that 
\[\cdots < X_k < X_{k-1} < \cdots < X_1 < X_0 \]
which contradicts the fact that $\La$ is representation-directed and representation-finite.

(a) and (b) imply (c): We have
\begin{align*}
\cC^{\perp n} &= \{ X\in \mla \mid \Ext^i(\cC, X)=0 \text{ for all $0< i < n$} \} \\
&= \{X\in \mla \mid  \Ext^i(M, X) = 0 \text{ for all $0< i < n$ and $M\in \cC$} \} \\
&= \{X\in \mla \mid \Ext^i(M,X) = 0 \text{ for all $0< i< n$ and $M\in \cC_P$} \} \\
&= \{X \in \mla \mid X\not\in \LS(M) \text{ for all $M\in \cC_P$} \}\\
&= \{X \in \mla \mid X\in \LS(M)^c \text{ for all $M\in \cC_P$} \}\\
&=\bigcap_{M\in \cC_P} \LS (M)^c.
\end{align*}
Similarly,
\[^{\perp n}\cC = \bigcap_{N\in \cC_I} \RS (N)^c.\]
Hence
\[\cC^{\perp n} =\bigcap_{M\in \cC_P} \LS (M)^c \overset{\text{(b2)}}{=} \bigcap_{M\in \cC_P} \RS (\tn M)^c  \overset{\text{(a2)}}{=}\bigcap_{N\in \cC_I} \RS (N)^c=\text{$^{\perp n}\cC$}.\]

It remains to show that $\cC =$ $^{\perp n}\cC$. Let us first show that $^{\perp n}\cC \subseteq \cC$. Let $X\in$ $ ^{\perp n}\cC$ and without loss of generality we can assume that $X$ is indecomposable (otherwise use additivity of $\Ext$). Moreover, if $X$ is projective then $X\in \cC$ by (a1) so we further assume that $X$ is nonprojective. Consider the sequence $\tn^kX$ for $k\geq 0$. We consider two cases.

Case 1: $\LS(\tn^{k-1}X)=\RS(\tn^k X)$ for all $k\geq 1$. Then, since $\La$ is representation-directed, there exists some minimal $l$ such that $\tn^l X=0$. Since 
\[\varnothing = \RS(0)=\RS(\tn^l X)=\LS(\tn^{l-1}X),\]
$\tn^{l-1}X$ is projective, and so $\tn^{l-1}X\in \cC$. Since $l$ was minimal, $\tn^{l-1}X\neq 0$. Consider the modules $\om^i \tn^k X$ where $0\leq i <n$ and $0\leq k \leq l-2$. Since $\tn^{l-1}X\neq 0$, they are all nonprojective. Using Corollary \ref{used} and induction on $k$ we find that they are all indecomposable. Hence $\tn^{l-1} X = \tau \om^{n-1}\tn^{l-2} X$ is also indecomposable. Since $\RS(\tn^{l-1}X) =\LS(\tn^{l-2}X)\neq \varnothing$ (because $\tn^{l-2}X$ is nonprojective), it follows that $\tn^{l-1}X$ is noninjective and so $\tn^{l-1}X\in \cC_I$. On the other hand, Lemma \ref{sameshadows} implies $\tno\tn^k X \cong \tn^{k-1} X$ for all $1\leq k\leq l-1$, and so $\tau_n^{-(l-1)}\tn^{l-1}X=X$. Since $\tn^{l-1}X\in\cC_I$, it follows that $X\in \cC$.

Case 2: There exists some $m\geq 1$ such that $\LS(\tn^{m-1}X) \neq \RS(\tn^m X)$. In particular, $\tn^{m-1}X \neq 0$. Pick $l$ minimal such that $\LS(\tn^{l-1} X) \neq \RS(\tn^l X)$. Then we have $\LS(\tn^{k-1}X) = \RS(\tn^k X)$ for all $0<k<l$, and as in Case 1, $\tn^{k-1}X$ is indecomposable nonprojective. Moreover, Lemma \ref{sameshadows} implies $\tno\tn^k X = \tn^{k-1}X$ and so $\tn^{k}X$ is noninjective for $0< k <l$. In particular, by Proposition \ref{injective}, there exists an indecomposable projective module $P$ such that $P\in\LS(\tn^{l-1}X)$. Then $\tn^{l-1}X\in \RS(P) = \LS(\tno P)$ by (b3). Equivalently, $\tno P\in \RS(\tn^{l-1}X)=\LS(\tn^{l-2}X)$. Repeating this argument, we get $\tau_n^{-(l-1)}P \in \LS(X)$. Set $\tau_n^{-(l-1)}P=N$; then $N\in\cC_I$ and $X\in \RS(N)$, contradicting $X\in$ $^{\perp n}\cC= \bigcap_{N\in \cC_I} \RS (N)^c$ and so Case 2 is impossible.

Finally, let us show that $\cC \subseteq \cC^{\perp n}$. Assume towards a contradiction that $Y\in \cC$ is indecomposable but there exists some $M\in \cC_P$ such that $Y\in \LS(M)=\RS(\tn M)$. By representation-directedness and because of (a2), there exists some minimal $l$ such that $\tn^l Y$ or $\tn^l M$ is indecomposable projective. Since $\tn M\in \LS(Y)$, $Y$ is nonprojective, and so $\tn M\in \LS(Y)=\RS(\tn Y)$ by (b2), or $\tn Y\in \LS(\tn M)=\RS(\tn^2 M)$. Repeating this argument we get $\tn^{l} Y\in \RS(\tn^{l+1} M)$, which is a contradiction since either $\tn^l Y$ is projective or $\tn^{l+1} M=0$.
\end{proof}

\begin{example}
Let us give an example of a $2$-cluster tilting subcategory using Theorem \ref{main1}. Let $\La$ be the path algebra of the quiver with relations

\noindent\makebox[\textwidth]{
\begin{tikzpicture}[scale=0.8, transform shape]
\node (A) at (0,0) {$\circ$};
\node (B) at (-1,1) {$\circ$};
\node (C) at (-2,0) {$\circ$};
\node (D) at (-2,2) {$\circ$};
\node (E) at (-3,1) {$\circ$};
\node (F) at (-4,0) {$\circ$};
\node (G) at (-5,1) {$\circ$};
\node (H) at (1,1) {$\circ$};

\draw[<-] (A) to (B);
\draw[<-] (B) to (D);
\draw[<-] (B) to (C);
\draw[<-] (D) to (E);
\draw[<-] (C) to (E);
\draw[<-] (E) to (F);
\draw[<-] (F) to (G);
\draw[<-] (H) to (A);

\draw[dotted] (A) -- (C);
\draw[dotted] (C) -- (F);
\draw[dotted] (H) -- (B);
\draw[dotted] (B) -- (E);
\draw[dotted] (E) -- (G);
\end{tikzpicture}}
Note that $\La$ is representation-directed, special biserial and that indecomposable modules are determined uniquely by their dimension vectors. The Auslander-Reiten quiver of $\La$ is

\noindent\makebox[\textwidth]{
\begin{tikzpicture}[scale=0.55, transform shape]
\tikzstyle{nct}=[circle, minimum width=6pt, draw, inner sep=0pt]
\tikzstyle{nct2}=[circle, minimum width=6pt, draw=white, inner sep=0pt]

\node[nct](S) at (-3,0) {$\quib{0}{0}{0}{0}{0}{0}{1}{0}$};
\node[nct](R) at (-1.5,-1.5) {$\quib{0}{0}{0}{0}{0}{1}{1}{0}$};
\node[nct2](A) at (0,0) {$\quib{0}{0}{0}{0}{0}{1}{0}{0}$};
\node[nct](B) at (1.5,1.5) {$\quib{0}{0}{0}{0}{1}{1}{0}{0}$};
\node[nct](C) at (3,3) {$\quib{0}{0}{1}{0}{1}{1}{0}{0}$};
\node[nct](D) at (3,0) {$\quib{0}{0}{0}{0}{1}{0}{0}{0}$};
\node[nct2](E) at (4.5,1.5) {$\quib{0}{0}{1}{0}{1}{0}{0}{0}$};
\node[nct](F) at (4.5,-1.5) {$\quib{0}{0}{0}{1}{1}{0}{0}{0}$};
\node[nct2](G) at (6,0) {$\quib{0}{0}{1}{1}{1}{0}{0}{0}$};
\node[nct2](H) at (7.5,1.5) {$\quib{0}{0}{0}{1}{0}{0}{0}{0}$};
\node[nct](I) at (7.5,0) {$\quib{0}{1}{1}{1}{1}{0}{0}{0}$};
\node[nct2](J) at (7.5,-1.5) {$\quib{0}{0}{1}{0}{0}{0}{0}{0}$};
\node[nct2](K) at (9,0) {$\quib{0}{1}{1}{1}{0}{0}{0}{0}$};
\node[nct2](L) at (10.5,1.5) {$\quib{0}{1}{1}{0}{0}{0}{0}{0}$};
\node[nct](M) at (10.5,-1.5) {$\quib{0}{1}{0}{1}{0}{0}{0}{0}$};
\node[nct](N) at (12,3) {$\quib{1}{1}{1}{0}{0}{0}{0}{0}$};
\node[nct](O) at (12,0) {$\quib{0}{1}{0}{0}{0}{0}{0}{0}$};
\node[nct](P) at (13.5,1.5) {$\quib{1}{1}{0}{0}{0}{0}{0}{0}$};
\node[nct2](Q) at (15,0) {$\quib{1}{0}{0}{0}{0}{0}{0}{0}$};
\node[nct](T) at (16.5,-1.5) {$\quib{1}{0}{0}{0}{0}{0}{0}{1}$};
\node[nct](U) at (18,0) {$\quib{0}{0}{0}{0}{0}{0}{0}{1}$};

\draw[->, yshift=10cm] (A) -- (B);
\draw[->] (B) -- (C);
\draw[->] (B) -- (D);
\draw[->] (C) -- (E);
\draw[->] (D) -- (E);
\draw[->] (D) -- (F);
\draw[->] (E) -- (G);
\draw[->] (F) -- (G);
\draw[->] (G) -- (H);
\draw[->] (G) -- (I);
\draw[->] (G) -- (J);
\draw[->] (H) -- (K);
\draw[->] (I) -- (K);
\draw[->] (J) -- (K);
\draw[->] (K) -- (L);
\draw[->] (K) -- (M);
\draw[->] (L) -- (N);
\draw[->] (L) -- (O);
\draw[->] (M) -- (O);
\draw[->] (N) -- (P);
\draw[->] (O) -- (P);
\draw[->] (P) -- (Q);
\draw[->] (Q) -- (T);
\draw[->] (T) -- (U);
\draw[->] (R) -- (A);
\draw[->] (S) -- (R);
\end{tikzpicture}.}
Let $M$ be the direct sum of all encircled modules. Note that their syzygies and cosyzygies are indecomposable or zero and computing $\tau_2$ and $\tau_2^-$ applied to them we get

\[
\begin{tikzpicture}[scale=0.7, transform shape]
\node (A) at (0,0) {$\quib{0}{0}{0}{0}{0}{0}{1}{0}$}; 
\node (B) at (3,0) {$\quib{0}{0}{0}{0}{1}{0}{0}{0}$};
\node (C) at (6,0) {$\quib{0}{1}{0}{0}{0}{0}{0}{0}$};
\node (D) at (9,0) {$\quib{0}{0}{0}{0}{0}{0}{0}{1}$};

\draw[->] (A) to [out=30,in=150] node[draw=none, midway, above] {$\tau_2^-$} (B);
\draw[->] (B) to [out=30,in=150] node[draw=none, midway, above] {$\tau_2^-$} (C);
\draw[->] (C) to [out=30,in=150] node[draw=none, midway, above] {$\tau_2^-$} (D);

\draw[->] (D) to [out=-150,in=-30] node[draw=none, midway, below] {$\tau_2$} (C);
\draw[->] (C) to [out=-150,in=-30] node[draw=none, midway, below] {$\tau_2$} (B);
\draw[->] (B) to [out=-150,in=-30] node[draw=none, midway, below] {$\tau_2$} (A);
\end{tikzpicture},
\]
\[
\begin{tikzpicture}[scale=0.7, transform shape]
\node (A) at (0,0) {$\quib{0}{0}{0}{0}{1}{1}{0}{0}$};
\node (B) at (3,0) {$\quib{0}{1}{0}{1}{0}{0}{0}{0}$};

\draw[->] (A) to [out=30,in=150] node[draw=none, midway, above] {$\tau_2^-$} (B);
\draw[->] (B) to [out=-150,in=-30] node[draw=none, midway, below] {$\tau_2$} (A);
\end{tikzpicture},
\begin{tikzpicture}[scale=0.7, transform shape]
\node (C) at (6,0) {$\quib{0}{0}{0}{1}{1}{0}{0}{0}$};
\node (D) at (9,0) {$\quib{1}{1}{0}{0}{0}{0}{0}{0}$};

\draw[->] (C) to [out=30,in=150] node[draw=none, midway, above] {$\tau_2^-$} (D);
\draw[->] (D) to [out=-150,in=-30] node[draw=none, midway, below] {$\tau_2$} (C);

\end{tikzpicture}.
\]
If we let $\cC=\text{add}M$, conditions (a) of Theorem \ref{main1} are satisfied for $\cC$ and so $\cC$ is a $2$-cluster tilting subcategory. A simple computation shows that $\gldim \La=3$; as far as we know this is the first example of an algebra with global dimension $3$ that admits a $2$-cluster tilting subcategory.  
\end{example}

\section{$n$-cluster tilting subcategories of acyclic Nakayama algebras with homogeneous relations}

\subsection{Motivation}

In this section we aim to use Theorem \ref{main1} to produce examples of $n$-cluster tilting subcategories for representation-directed algebras. We begin with a necessary condition.

\begin{proposition}
\label{decomposition}
Let $Q$ be a connected quiver with $m$ vertices, $\La=KQ/I$ where $I$ is an admissible ideal and $n\geq 2$. Let $1<k<m$ be a vertex in $Q_0$ and $Q_A$ and $Q_B$ be the full subquivers on vertices $\{1,\dots,k-1\}$ respectively $\{k+1,\dots, m\}$. Assume that $Q_1=(Q_A)_{1}\cup(Q_B)_{1}\cup t^{-1}(k)\cup s^{-1}(k)$ and either
\begin{itemize}
\item[(a)] $k$ is a sink and there exist arrows $\alpha,\beta\in Q_1$ with $t(\alpha)=t(\beta)=k$, $s(\alpha)\in (Q_A)_0$ and $s(\beta)\in (Q_B)_0$,
\end{itemize}
or
\begin{itemize}
\item[(b)] $k$ is a source and there exist arrows $\alpha,\beta\in Q_1$ with $s(\alpha)=s(\beta)=k$, $t(\alpha)\in (Q_A)_0$ and $t(\beta)\in (Q_B)_0$,  
\end{itemize}
then $\La=KQ/I$ admits no $n$-cluster tilting subcategory.
\end{proposition}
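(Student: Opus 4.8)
The plan is to assume that an $n$-cluster tilting subcategory $\cC$ of $\mla$ exists and to derive a contradiction from Corollary \ref{easy}. Since any $n$-cluster tilting subcategory contains $\La$ and $D\La$ and is closed under summands, $\cC$ contains every indecomposable projective and every indecomposable injective $\La$-module. The key observation is that the simple module $S_k$ at the separating vertex $k$ will be projective (in case (a)) or injective (in case (b)), hence automatically a member of $\cC$, while its cosyzygy, respectively syzygy, decomposes. As $n\geq 2$, the exponent $i=1$ lies in the range $0<i<n$, so this decomposability is exactly what Corollary \ref{easy} forbids.

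First I would treat case (a), where $k$ is a sink. Since no arrow starts at $k$, the projective cover of $S_k$ is $S_k$ itself, so $S_k=P_k$ is projective and lies in $\cC$; and $k$ is not a source, because $\alpha$ and $\beta$ end at $k$, so $S_k$ is not injective and therefore $S_k\in\cC_I$. Next I would compute the cosyzygy $\om^{-1}S_k=I_k/S_k$, where $I_k$ is the injective hull of $S_k$. The support of $I_k$ consists of $k$ together with the vertices admitting a nonzero path to $k$; by the hypothesis $Q_1=(Q_A)_1\cup(Q_B)_1\cup t^{-1}(k)\cup s^{-1}(k)$ no arrow joins $Q_A$ to $Q_B$, and since $k$ is a sink no path can cross from one side to the other. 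Deleting the socle $S_k$ therefore separates this support into its $Q_A$-part and its $Q_B$-part, and both are nonzero because $\alpha$ and $\beta$ are nonzero in $\La$ (as $I$ is admissible). Thus $\om^{-1}S_k$ decomposes as a direct sum of two nonzero modules, contradicting Corollary \ref{easy}(b).

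Case (b), where $k$ is a source, I would handle dually. Since no arrow ends at $k$, the injective hull of $S_k$ is $S_k$ itself, so $S_k=I_k$ is injective and lies in $\cC$; and $k$ is not a sink, so $S_k$ is not projective and $S_k\in\cC_P$. Here I would compute the syzygy $\om S_k=\rad P_k$. By the same arrow hypothesis, no path starting at the source $k$ can pass from $Q_A$ to $Q_B$, so $\rad P_k$ splits as a direct sum of a nonzero $Q_A$-supported module and a nonzero $Q_B$-supported module. This contradicts Corollary \ref{easy}(a).

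The only genuinely non-formal step is the decomposition of $I_k/S_k$, respectively $\rad P_k$, and this is precisely where the two hypotheses are jointly used: the combinatorial condition on $Q_1$ rules out arrows directly between $Q_A$ and $Q_B$, while the sink/source assumption on $k$ prevents any path from crossing through $k$, so that after deleting the vertex $k$ no morphism of representations can link the two sides. I expect this to be the main point requiring care; the remainder reduces to the elementary facts that $P_i=S_i$ for a sink $i$ and $I_i=S_i$ for a source $i$, together with $\La,D\La\in\cC$. Note that no use of representation-directedness is needed, since Corollary \ref{easy} holds for arbitrary finite-dimensional algebras.
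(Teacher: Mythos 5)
Your proposal is correct and follows essentially the same route as the paper: in case (a) it takes the simple projective $P(k)=S_k$ at the sink $k$, shows its cosyzygy $I(k)/S_k$ splits into a nonzero $Q_A$-part and a nonzero $Q_B$-part because no arrow joins $Q_A$ to $Q_B$ and the vertex $k$ has been deleted, and derives the contradiction from Proposition \ref{SO} (equivalently Corollary \ref{easy}). The only cosmetic difference is that the paper certifies the decomposition by exhibiting an explicit nontrivial idempotent endomorphism of $\Omega^- P(k)$, where you argue directly from the splitting of the support.
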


\begin{proof}
Let us prove the proposition when $k$ is a sink; the other case is similar. Consider the indecomposable projective module $P(k)$ corresponding to the vertex $k$. Its dimension vector is
\[\dimv P(k) = \left(\begin{smallmatrix} 0 \\ \vdots \\ 0 \\ 1 \\ 0 \\ \vdots \\ 0 \end{smallmatrix}\right).\]
Moreover it is noninjective and its injective hull, $I(k)$, has $I(k)_k=K$ since $k$ is a sink. Furthermore, in $\dimv I(k)$, there is at least one nonzero entry in a position $i<k$ since there is an arrow from a vertex in $Q_A$ to $k$. Similarly, there is at least one nonzero entry in a position $j>k$. Therefore we have
\[ \dimv I(k) = \left(\begin{smallmatrix} a_1 \\ \vdots \\ a_{k-1} \\ 1 \\ b_{k+1} \\ \vdots \\ b_m \end{smallmatrix}\right), \dimv\om^- P(k) = \left(\begin{smallmatrix} a_1 \\ \vdots \\ a_{k-1} \\ 0 \\ b_{k+1} \\ \vdots \\ b_m \end{smallmatrix}\right),\]
where $(a_1,\cdots, a_{k-1})\neq (0,\cdots ,0), (b_{k+1},\cdots, b_m)\neq (0,\cdots , 0)$. Let $\om^- P(k)=(M_i,\phi_{\alpha})_{i\in Q_0, \alpha\in Q_1}$. Let $f=(f_i)_{i\in Q_0}$ where $f_i: M_i \rightarrow M_i$ is identity if $i<k$ and zero otherwise. Note that $f\neq 0$ and $f\neq \id$. We will prove that $f$ is an endomorphism of $(M_i,\phi_{\alpha})$. Let $\alpha:a\rightarrow b$ be an arrow in $Q$. Note that we cannot have $a<k<b$ or $b<k<a$ since $Q\setminus \{k\}$ is disconnected and we cannot have $a=k$ since $k$ is a sink. We need to show that
\begin{equation}
\label{morphism}
\phi_{\alpha}f_a=f_b\phi_{\alpha}.
\end{equation}
If $a,b<k$, $f_a=f_b=\id$ and (\ref{morphism}) becomes $\phi_{\alpha}=\phi_{\alpha}$. If $k<a,b$ then $f_a=f_b=0$ and (\ref{morphism}) becomes $0=0$. If $b=k$, then since $M_k=0$, $\phi_{\alpha}=0$ and (\ref{morphism}) becomes $0=0$ again. Hence $f\in\End(\om^- P(k))$ with $f^2=f$ but $f\neq 0$ and $f\neq \id$, and so $\End(\om^- P(k))$ is not local, which implies that $\om^- P(k)$ is not indecomposable. Since any $n$-cluster tilting subcategory must contain the projective modules, $\La$ doesn't admit an $n$-cluster tilting subcategory by Proposition \ref{SO}.
\end{proof}

\begin{example}
\label{suggestive}
Let $Q$ be a quiver with underlying graph the Dynkin diagram $A_m$ for $m\geq 3$, with nonlinear orientation. Pick any source or sink $k$ with degree $2$. Then Proposition \ref{decomposition} implies that there exists no $n$-cluster tilting $KQ/I$-module for $I$ an admissible ideal of $KQ$.
\end{example}

Example \ref{suggestive} suggests that perhaps the simplest class of representation-directed algebras for which one should try to find $n$-cluster tilting subcategories is quotients of the path algebra of the quiver 
\[
\begin{tikzpicture}[scale=0.86, transform shape]
\node (Q) at (-1,0) {$Q_m:$};
\node (1) at (0,0) {$m$}; 
\node (2) at (2,0) {$m-1$};
\node (3) at (4,0) {$m-2$};
\node (4) at (6,0) {$\cdots$};
\node (m-1) at (7.5,0) {$2$};
\node (m) at (9,0) {$1$};
\draw[->] (1) -- (2) node[draw=none,midway,above] {$a_{m-1}$};
\draw[->] (2) -- (3) node[draw=none,midway,above] {$a_{m-2}$};
\draw[->] (3) -- (4) node[draw=none,midway,above] {$a_{m-3}$};
\draw[->] (4) -- (m-1) node[draw=none,midway,above] {$a_{2}$};
\draw[->] (m-1) -- (m) node[draw=none,midway,above] {$a_{1}$};
\end{tikzpicture},
\]
by an admissible ideal. Such algebras are called \textit{acyclic Nakayama} and for more details on them we refer to \cite{ASS}. 

\subsection{Computations}

In this section we will consider acyclic Nakayama algebras with homogeneous relations. That is for $m\geq 3$ and $l\geq 2$, we will denote $\Lml=KQ_m/(\rad KQ_m)^l$. As we will see later, it turns out that this is a necessary condition for a Nakayama algebra to be $d$-representation finite. Since our main tool will be Theorem \ref{main1}, we will need to compute syzygies, cosyzygies and $n$-Auslander-Reiten translations for $\Lml$-modules.

Recall that the isomorphism classes of the indecomposable modules for any acyclic Nakayama algebra can be described by the representations $M(i,j)$ of the form

\begin{equation*}
\label{indecomposable}
\begin{tikzpicture}
\node (1) at (1,0) {$0$};
\node (2) at (2,0) {$\cdots$};
\node (3) at (3,0) {$0$};
\node (4) at (4,0) {$K$};
\node (5) at (5,0) {$\cdots$};
\node (6) at (6,0) {$K$};
\node (7) at (7,0) {$0$};
\node (8) at (8,0) {$\cdots$};
\node (9) at (9,0) {$0$};
\draw[->] (1) -- (2) node[draw=none, midway, above] {\tiny $0$};
\draw[->] (2) -- (3) node[draw=none, midway, above] {\tiny $0$};
\draw[->] (3) -- (4) node[draw=none, midway, above] {\tiny $0$};
\draw[->] (4) -- (5) node[draw=none, midway, above] {\tiny $1$};
\draw[->] (5) -- (6) node[draw=none, midway, above] {\tiny $1$};
\draw[->] (6) -- (7) node[draw=none, midway, above] {\tiny $0$};
\draw[->] (7) -- (8) node[draw=none, midway, above] {\tiny $0$};
\draw[->] (8) -- (9) node[draw=none, midway, above] {\tiny $0$};

\node(11) at (1,-0.3) {\tiny{m}};
\node (33) at (4,-0.3) {\tiny{i+j-1}};
\node (44) at (6,-0.3) {\tiny{i}};
\node (55) at (9,-0.3) {\tiny{1}};
\end{tikzpicture}
\end{equation*}
with $M(i,j)I=0$ (\cite{ASS}, Gabriel's Theorem). We will use the convention that $M(i,j)=0$ if the coordinates $(i,j)$ do not define a module. In particular, for $\Lml$-modules we have $M(i,j)\neq 0$ if and only if $1\leq i \leq m$, $1\leq j \leq l$ and $2\leq i+j\leq m+1$. For a vertex $k\in Q_0$, we will denote by $P(k)$ respectively $I(k)$ the corresponding indecomposable projective respectively injective $\Lml$-module. In the rest of the section, all modules will be $\Lml$-modules.

\begin{lemma}
\label{projind}
Let $M(i,j)\neq 0$. Then
\begin{itemize}
\item[(a)] $P(k) = \begin{cases} M(1,k) &\mbox{ if } 1\leq k \leq l-1, \\ M(1+k-l,l) &\mbox{ if } l\leq k\leq m \end{cases}$.
\item[(b)] $I(k)= \begin{cases} M(k,l) &\mbox{ if } 1\leq k \leq m-l+1, \\ M(k, m+1-k) &\mbox{ if } m-l+2\leq k \leq m \end{cases}$.
\item[(c)] $M(i,j)$ is both projective and injective if and only if $j=l$ and $1\leq i \leq m-l+1$. 
\end{itemize}
\end{lemma}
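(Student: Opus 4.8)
The plan is to prove Lemma \ref{projind} by direct computation from the explicit description of indecomposable $\Lml$-modules as $M(i,j)$, which correspond to representations supported on the interval of consecutive vertices from $i$ to $i+j-1$ (with $j$ the dimension/length of the module and $i$ the bottom/socle position). The three parts are essentially a bookkeeping exercise: part (a) identifies which $M(i,j)$ are the projective covers $P(k)$, part (b) does the same for the injectives $I(k)$, and part (c) combines the two to characterize the projective-injective modules. The main conceptual input I would use is that for a Nakayama algebra $KQ_m/(\rad KQ_m)^l$, the projective $P(k)$ is the largest uniserial module with top $S(k)$, hence of length $\min(k,l)$ starting at vertex $k$; dually the injective $I(k)$ is the largest uniserial module with socle $S(k)$.

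\emph{For part (a),} I would argue that $P(k)$ is uniserial with top the simple $S(k)$ at vertex $k$, and its radical series is cut off either by reaching the sink (vertex $1$) or by the homogeneous relation $(\rad)^l = 0$, whichever happens first. Concretely, $P(k)$ is supported on vertices $k, k-1, \dots, \max(1, k-l+1)$, so its length is $\min(k, l)$ and its socle sits at vertex $\max(1, k-l+1)$. When $1 \le k \le l-1$ the relation does not yet bite, so $P(k)$ has length $k$ with socle at vertex $1$, giving $M(1,k)$ in the $(i,j)$ notation (bottom vertex $i=1$, length $j=k$). When $l \le k \le m$ the relation truncates the length to $l$, with socle at vertex $k-l+1$, giving $M(k-l+1, l) = M(1+k-l, l)$. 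I would verify the endpoints $2 \le i+j \le m+1$ hold in each case so the claimed modules are genuinely nonzero.

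\emph{For part (b),} I would run the dual computation: $I(k)$ is uniserial with socle $S(k)$ at vertex $k$, built up along the arrows $k \to k-1 \to \cdots$, so it is supported on vertices $m, m-1, \dots$ down to $k$ but truncated by the relation to length at most $l$. Since the arrows point toward vertex $1$ and the socle is at $k$, the top of $I(k)$ sits at vertex $\min(m, k+l-1)$, so the length is $\min(m-k+1, l)$ and the bottom vertex in the $(i,j)$ parametrization is $i=k$. When $1 \le k \le m-l+1$ the relation truncates to length $l$, yielding $M(k,l)$; when $m-l+2 \le k \le m$ the module reaches the source (vertex $m$) first, so the length is $m-k+1$, yielding $M(k, m+1-k)$. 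Again I would confirm these satisfy the nonzero constraints.

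\emph{For part (c),} I would intersect the two lists from (a) and (b). A module $M(i,j)$ is projective-injective iff it appears among the $P(k)$ and among the $I(k)$. From (a), the projectives with $j=l$ are exactly $M(1+k-l, l)$ for $l \le k \le m$, i.e.\ $M(i,l)$ for $1 \le i \le m-l+1$; from (b), the injectives with $j=l$ are exactly $M(k,l)$ for $1 \le k \le m-l+1$. These two families coincide, and one checks that the remaining (short) projectives $M(1,k)$ with $k<l$ and short injectives $M(k,m+1-k)$ with $m+1-k<l$ overlap only in degenerate cases already covered, so the projective-injectives are precisely $\{M(i,l) : 1 \le i \le m-l+1\}$, which is the stated condition $j=l$ and $1 \le i \le m-l+1$. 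I expect \textbf{the main obstacle} to be purely notational rather than mathematical: keeping the two different indexings consistent (the vertex label $k$ versus the pair $(i,j)$, and the fact that arrows point from larger to smaller vertex labels) so that each truncation case lands in the correct subrange, and making sure the boundary indices between the two cases in (a) and (b) match up without gap or overlap.
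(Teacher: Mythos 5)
Your proposal is correct and follows essentially the same route as the paper: both identify $P(k)$ and $I(k)$ as the uniserial modules with top respectively socle $S(k)$, truncated either by the endpoint of the quiver or by the relation $(\rad)^l=0$, and then read off the $(i,j)$-coordinates, with (c) obtained by intersecting the two lists. The paper merely writes out the representations explicitly where you describe their supports in words, so there is no substantive difference.
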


\begin{proof}
(c) follows immediately by (a) and (b). We only prove (a); (b) is proved similarly. Note that for $1\leq k\leq l-1$, $P(k)$ as a representation is isomorphic to
\[\begin{tikzpicture}
\node (1) at (1,0) {$0$};
\node (2) at (2,0) {$\cdots$};
\node (3) at (3,0) {$0$};
\node (4) at (4,0) {$K$};
\node (5) at (5,0) {$\cdots$};
\node (6) at (6,0) {$K$};
\draw[->] (1) -- (2) node[draw=none, midway, above] {\tiny $0$};
\draw[->] (2) -- (3) node[draw=none, midway, above] {\tiny $0$};
\draw[->] (3) -- (4) node[draw=none, midway, above] {\tiny $0$};
\draw[->] (4) -- (5) node[draw=none, midway, above] {\tiny $1$};
\draw[->] (5) -- (6) node[draw=none, midway, above] {\tiny $1$};

\node(11) at (1,-0.3) {\tiny{m}};
\node (33) at (4,-0.3) {\tiny{k}};
\node (44) at (6,-0.3) {\tiny{1}};
\end{tikzpicture},
\]
which is precisely $M(1,k)$. Similarly, when $l\leq k\leq m$, $P(k)$ is isomorphic to
\[
\begin{tikzpicture}
\node (1) at (1,0) {$0$};
\node (2) at (2,0) {$\cdots$};
\node (3) at (3,0) {$0$};
\node (4) at (4,0) {$K$};
\node (5) at (5,0) {$\cdots$};
\node (6) at (6,0) {$K$};
\node (7) at (7,0) {$0$};
\node (8) at (8,0) {$\cdots$};
\node (9) at (9,0) {$0$};
\draw[->] (1) -- (2) node[draw=none, midway, above] {\tiny $0$};
\draw[->] (2) -- (3) node[draw=none, midway, above] {\tiny $0$};
\draw[->] (3) -- (4) node[draw=none, midway, above] {\tiny $0$};
\draw[->] (4) -- (5) node[draw=none, midway, above] {\tiny $1$};
\draw[->] (5) -- (6) node[draw=none, midway, above] {\tiny $1$};
\draw[->] (6) -- (7) node[draw=none, midway, above] {\tiny $0$};
\draw[->] (7) -- (8) node[draw=none, midway, above] {\tiny $0$};
\draw[->] (8) -- (9) node[draw=none, midway, above] {\tiny $0$};

\node(11) at (1,-0.3) {\tiny{m}};
\node (33) at (4,-0.3) {\tiny{k}};
\node (44) at (6,-0.3) {\tiny{k-l+1}};
\node (55) at (9,-0.3) {\tiny{1}};
\end{tikzpicture}.
\]
which is precisely $M(1+k-l,l)$.
\end{proof}

Next we wish to compute syzygies and cosyzygies of the indecomposable $\Lml$-modules.

\begin{lemma}
\label{syzygy}
Let $M(i,j)\neq 0$. Then 
\begin{itemize}
\item[(a)] If $M(i,j)$ is nonprojective, 
\[\Omega M(i,j)=\begin{cases} M(1,i-1) &\mbox{ if } i+j\leq l, \\ M(i+j-l, l-j) &\mbox{ if } l+1 \leq i+j. \end{cases}\]
\item[(b)] If $M(i,j)$ is noninjective, 
\[\Omega^{-} M(i,j) = \begin{cases} M(i+j,l-j) &\mbox{ if } 1\leq i \leq m-l+1, \\ M(m+i-l+j,l-j) &\mbox{ if } m-l+2\leq i \leq m. \end{cases}\]
\end{itemize}
\end{lemma}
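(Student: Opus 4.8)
The plan is to compute the (co)syzygies directly from the explicit description of the indecomposable modules $M(i,j)$ together with the formulas for projective covers and injective hulls already established in Lemma \ref{projind}. Since every indecomposable $\Lml$-module is a string supported on consecutive vertices, its projective cover is the projective module generated at the ``top'' vertex, and the syzygy is read off as a subrepresentation; dually for injective hulls and cosyzygies. I would prove only (a); part (b) follows by the analogous dual computation.

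For (a), let $M(i,j)$ be nonprojective with $M(i,j)\neq 0$. The module $M(i,j)$ is supported on vertices $i,i+1,\dots,i+j-1$ and is generated by its top, which sits at vertex $i+j-1$. By Lemma \ref{projind}(a) its projective cover is the indecomposable projective $P(i+j-1)$, which is $M(1,i+j-1)$ when $i+j-1\le l-1$ and $M(i+j-l,l)$ when $i+j-1\ge l$. Concretely, the projective cover map $P(i+j-1)\twoheadrightarrow M(i,j)$ identifies $M(i,j)$ with the quotient supported on the top $j$ vertices of $P(i+j-1)$, so the syzygy $\om M(i,j)$ is the kernel, which is the submodule supported on the remaining bottom vertices. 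First I would dispose of the condition that $M(i,j)$ is nonprojective: by Lemma \ref{projind}(a) this means $i+j-1\neq i+j-1$ would force $i\ge 2$ or $j<l$ in the appropriate range, i.e.\ $M(i,j)\neq P(i+j-1)$ precisely when the support does not reach down to vertex $1$ in the way a projective would. The two cases of the formula correspond exactly to which of the two branches of Lemma \ref{projind}(a) computes the projective cover:

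\begin{itemize}
\item[$\bullet$] If $i+j\le l$, then $i+j-1\le l-1$, so $P(i+j-1)=M(1,i+j-1)$ is supported on vertices $1,\dots,i+j-1$. Removing the top $j$ vertices (those supporting $M(i,j)$, namely $i,\dots,i+j-1$) leaves the submodule supported on $1,\dots,i-1$, which is $M(1,i-1)$.
\item[$\bullet$] If $i+j\ge l+1$, then $i+j-1\ge l$, so $P(i+j-1)=M(i+j-l,l)$ is supported on vertices $i+j-l,\dots,i+j-1$, a string of length $l$. Removing the top $j$ vertices leaves the submodule supported on $i+j-l,\dots,i-1$, a string of length $l-j$, which is $M(i+j-l,l-j)$.
\end{itemize}

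The computation is essentially bookkeeping on where each string begins and ends, so there is no deep obstacle; the main thing to handle carefully is the boundary behaviour of the case split. The delicate points are verifying that the claimed syzygy is genuinely nonzero exactly when $M(i,j)$ is nonprojective (so that the formula is consistent with the convention $M(i,j)=0$ for invalid coordinates), and checking that in the second case $l-j\ge 0$ with equality giving $\om M(i,j)=0$ precisely in the projective borderline. I would confirm these inequalities using the admissibility constraints $1\le i\le m$, $1\le j\le l$, $2\le i+j\le m+1$ recorded before Lemma \ref{projind}, and then invoke the dual argument (injective hulls from Lemma \ref{projind}(b), cosyzygy as cokernel) to obtain (b).
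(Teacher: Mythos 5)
Your proposal is correct and follows essentially the same route as the paper: both identify the projective cover of $M(i,j)$ as $P(i+j-1)$ via Lemma \ref{projind}(a), split into the two cases $i+j\le l$ and $i+j\ge l+1$ according to which branch of that lemma applies, and read off $\Omega M(i,j)$ as the kernel of $P(i+j-1)\twoheadrightarrow M(i,j)$ (the paper just records this as an explicit short exact sequence of representations, e.g.\ $0\to M(i+j-l,l-j)\to M(i+j-l,l)\to M(i,j)\to 0$). The only blemish is the garbled aside about ``$i+j-1\neq i+j-1$'' when discussing nonprojectivity, but this does not affect the argument.
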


\begin{proof} We only prove (a); (b) is proved similarly. Assume first that $l+1\leq i+j$ and consider the following commutative diagram

\tikzset{
    position label/.style={
       below = 3pt,
       text height = 1.5ex,
       text depth = 1ex
    },
   brace/.style={
     decoration={brace, mirror},
     decorate
   }
}

\begin {figure}[H]
        \centering
        \resizebox {\columnwidth} {!} {
\begin{tikzpicture}
\node (U) at (-1,1) {$ M(i+j-l,l): $};
\node (U1) at (1,1) {$ 0 $};
\node (U2) at (2,1) {$ ... $};
\node (U3) at (3,1) {$ 0 $};
\node (U4) at (4,1) {$ K $};
\node (U5) at (5,1) {$ ... $};
\node (U6) at (6,1) {$ K $};
\node (U7) at (7,1) {$ K $};
\node (U8) at (8,1) {$ ... $};
\node (U9) at (9,1) {$ K $};
\node (U10) at (10,1) {$ 0 $};
\node (U11) at (11,1) {$ ... $};
\node (U12) at (12,1) {$ 0 $};

\draw[->] (U1) -- (U2);
\draw[->] (U2) -- (U3);
\draw[->] (U3) -- (U4);
\draw[->] (U4) -- (U5);
\draw[->] (U5) -- (U6);
\draw[->] (U6) -- (U7);
\draw[->] (U7) -- (U8);
\draw[->] (U8) -- (U9);
\draw[->] (U9) -- (U10);
\draw[->] (U10) -- (U11);
\draw[->] (U11) -- (U12);

\node (D) at (-1,0) {$ M(i,j): $};
\node (D1) at (1,0) {$ 0 $};
\node (D2) at (2,0) {$ ... $};
\node (D3) at (3,0) {$ 0 $};
\node (D4) at (4,0) {$ K $};
\node (D5) at (5,0) {$ ... $};
\node (D6) at (6,0) {$ K $};
\node (D7) at (7,0) {$ 0 $};
\node (D8) at (8,0) {$ ... $};
\node (D9) at (9,0) {$ 0 $};
\node (D10) at (10,0) {$ 0 $};
\node (D11) at (11,0) {$ ... $};
\node (D12) at (12,0) {$ 0 $};

\node (S1) at (4,-0.5) {$\underset{i+j-1}{\bullet}$};
\node (S2) at (6,-0.5) {$\underset{i}{\bullet}$};

\draw[->] (D1) -- (D2);
\draw[->] (D2) -- (D3);
\draw[->] (D3) -- (D4);
\draw[->] (D4) -- (D5);
\draw[->] (D5) -- (D6);
\draw[->] (D6) -- (D7);
\draw[->] (D7) -- (D8);
\draw[->] (D8) -- (D9);
\draw[->] (D9) -- (D10);
\draw[->] (D10) -- (D11);
\draw[->] (D11) -- (D12);

\node (T) at (-1,2) {$ M(i+j-l,l-j): $};
\node (T1) at (1,2) {$ 0 $};
\node (T2) at (2,2) {$ ... $};
\node (T3) at (3,2) {$ 0 $};
\node (T4) at (4,2) {$ 0 $};
\node (T5) at (5,2) {$ ... $};
\node (T6) at (6,2) {$ 0 $};
\node (T7) at (7,2) {$ K $};
\node (T8) at (8,2) {$ ... $};
\node (T9) at (9,2) {$ K $};
\node (T10) at (10,2) {$ 0 $};
\node (T11) at (11,2) {$ ... $};
\node (T12) at (12,2) {$ 0 $};

\node (N1) at (4,2.5) {$\overset{i+j-l+(l-1)}{\bullet}$};
\node (N3) at (7,2.5) {$\overset{i+j-l+(l-j-1)}{\bullet}$};
\node (N4) at (9,2.5) {$\overset{\;\;\;\;i+j-l}{\bullet}$};

\draw [brace] (N4.north) -- (N3.north) node [position label, yshift=4.5ex, pos=0.5] {$l-j$};
\draw [brace,decoration={raise=4ex}] (N4.north) -- (N1.north) node [position label, yshift=8.5ex, pos=0.5] {$l$};

\draw[->] (T1) -- (T2);
\draw[->] (T2) -- (T3);
\draw[->] (T3) -- (T4);
\draw[->] (T4) -- (T5);
\draw[->] (T5) -- (T6);
\draw[->] (T6) -- (T7);
\draw[->] (T7) -- (T8);
\draw[->] (T8) -- (T9);
\draw[->] (T9) -- (T10);
\draw[->] (T10) -- (T11);
\draw[->] (T11) -- (T12);

\draw[->] (U1) -- (D1);
\draw[->] (U3) -- (D3);
\draw[->] (U4) -- (D4);
\draw[->] (U6) -- (D6);
\draw[->] (U7) -- (D7);
\draw[->] (U9) -- (D9);
\draw[->] (U10) -- (D10);
\draw[->] (U12) -- (D12);

\draw[->] (T1) -- (U1);
\draw[->] (T3) -- (U3);
\draw[->] (T4) -- (U4);
\draw[->] (T6) -- (U6);
\draw[->] (T7) -- (U7);
\draw[->] (T9) -- (U9);
\draw[->] (T10) -- (U10);
\draw[->] (T12) -- (U12);

\draw[->] (T) -- (U) node[draw=none,midway,left] {$u$};
\draw[->] (U) -- (D) node[draw=none,midway,left] {$s$};
\draw [brace] (S1.south) -- (S2.south) node [position label, pos=0.5] {$j$};
\end{tikzpicture}

}
\end {figure}
where the arrows $K\rightarrow K$ are the identity and all other arrows are the zero map. Then 
\[0\rightarrow M(i+j-l,l-j) \overset{u}{\rightarrow} M(i+j-l,l)\overset{v}{\rightarrow} M(i,j) \rightarrow 0\]
is a short exact sequence. Since $M(i+j-l,l)=P(i+j-1)$ by Lemma \ref{projind}, we have $\Omega M(i,j)=M(i+j-l,l-j)$. If $i+j\leq l$, similarly we have the short exact sequence $0\rightarrow M(1,i-1)\rightarrow M(1,i+j-1) \rightarrow M(i,j) \rightarrow 0$ with $M(1,i+j-1) = P(i+j-1)$ and so $\om M(i,j)=M(1,i-1)$. 
\end{proof} 

\begin{corollary}
\label{ksyzygy}
Let $M(i,j)\neq 0$ with $j<l$ and $k\geq 1$. Then
\begin{itemize}
\item[(a)] Denote $\Omega ^kM(i,j)=M(i_k,j_k)$ and assume that $l+1\leq i_{k-1}+j_{k-1}$. Then
\[
\Omega^{k}M(i,j)=\begin{cases} 
M\left(i+j-\frac{k+1}{2}l,l-j\right) &\mbox{if } k \text{ is odd,}\\
M\left(i-\frac{k}{2}l,j\right) &\mbox{if } k \text{ is even.}
\end{cases}
\]
\item[(b)] Denote $\Omega ^{-k}M(i,j)=M(i^-_k,j^-_k)$ and assume that $i^{-}_{k-1} \leq m-l+1$. Then
\[
\Omega^{-k}M(i,j)=\begin{cases} 
M\left(i+j+\frac{k-1}{2}l,l-j\right) &\mbox{if } k \text{ is odd,}\\
M\left(i+\frac{k}{2}l,j\right) &\mbox{if } k \text{ is even.}
\end{cases}
\]
\end{itemize}
\end{corollary}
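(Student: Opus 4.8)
The plan is to prove Corollary \ref{ksyzygy} by induction on $k$, using Lemma \ref{syzygy} to perform each single step and tracking carefully which of the two cases of that lemma applies at every stage. I will only prove (a); (b) follows by a dual argument using the cosyzygy formula. The key observation is that the hypothesis $j<l$ together with the standing assumption $l+1\leq i_{k-1}+j_{k-1}$ means that at each step we are always in the second branch of Lemma \ref{syzygy}(a), namely $\om M(i',j')=M(i'+j'-l,\,l-j')$, so the recursion is governed entirely by that single rule.

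First I would establish the base cases $k=1$ and $k=2$ directly. For $k=1$, the assumption $l+1\leq i_0+j_0=i+j$ puts us in the second branch of Lemma \ref{syzygy}(a), giving $\om M(i,j)=M(i+j-l,\,l-j)$, which matches the odd formula for $k=1$. For $k=2$, I apply the rule once more to $M(i+j-l,\,l-j)$; here the first coordinate becomes $(i+j-l)+(l-j)-l=i-l$ and the second becomes $l-(l-j)=j$, yielding $M(i-l,j)$, which matches the even formula for $k=2$. The shape of these computations already reveals the pattern: applying $\om$ twice decreases the first coordinate by $l$ and returns the second coordinate to its original value $j$, while a single intermediate application swaps $j$ to $l-j$ and shifts the first coordinate appropriately.

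For the induction step I would split according to the parity of $k$. Suppose the formula holds up to $k-1$ and $k$ is even; then by the inductive hypothesis $\Omega^{k-1}M(i,j)=M\bigl(i+j-\tfrac{k}{2}l,\,l-j\bigr)$ (using $(k-1+1)/2=k/2$), and applying the second-branch rule of Lemma \ref{syzygy}(a) sends its first coordinate to $\bigl(i+j-\tfrac{k}{2}l\bigr)+(l-j)-l=i-\tfrac{k}{2}l$ and its second coordinate to $j$, giving exactly $M\bigl(i-\tfrac{k}{2}l,\,j\bigr)$. The odd case is entirely analogous: starting from the even formula $\Omega^{k-1}M(i,j)=M\bigl(i-\tfrac{k-1}{2}l,\,j\bigr)$, one application sends the first coordinate to $\bigl(i-\tfrac{k-1}{2}l\bigr)+j-l=i+j-\tfrac{k+1}{2}l$ and the second to $l-j$, matching the stated odd formula.

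The main obstacle, and the part deserving the most care, is the bookkeeping of the hypotheses that guarantee we remain in the correct branch of Lemma \ref{syzygy}(a) at each step. The statement builds in the assumption $l+1\leq i_{k-1}+j_{k-1}$, so I must verify that this standing inequality is exactly what licenses the second branch throughout the recursion and that the parity-alternation of the second coordinate between $j$ and $l-j$ is consistent with the sum $i_{k-1}+j_{k-1}$ behaving as required (rather than, say, accidentally dropping into the first branch where $\om M(i',j')=M(1,i'-1)$). I would therefore state clearly that the formula is only asserted under this hypothesis, so that no separate verification of the branch condition at intermediate steps is needed beyond invoking the assumption; the remaining work is then the purely arithmetic coordinate tracking described above, which is routine once the branch is fixed.
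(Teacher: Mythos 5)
Your proposal is correct and takes essentially the same route as the paper, whose entire proof reads ``Immediate by using Lemma \ref{syzygy} and induction on $k$''; your base cases, parity-split induction step, and coordinate arithmetic all check out. The only point you gesture at without fully nailing down—that the single hypothesis $l+1\leq i_{k-1}+j_{k-1}$ forces the second branch at \emph{every} earlier step (because a first-branch step produces a projective $M(1,i_t-1)$, after which the syzygy chain dies or violates the hypothesis)—is likewise left implicit in the paper.
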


\begin{proof}
Immediate by using Lemma \ref{syzygy} and induction on $k$.
\end{proof}

\begin{proposition}
\label{almost split sequence in auslander-reiten}
Let $M(i,j)\neq 0$ and $M(i,j+1)\neq 0$. Then the sequence
\[0\longrightarrow M(i,j) \overset{\left[ \begin{smallmatrix} r \\ p\end{smallmatrix}\right]}{\longrightarrow} M(i,j+1)\oplus M(i+1,j-1) \overset{\left[\begin{smallmatrix}
-t & q
\end{smallmatrix}\right]}{\longrightarrow} M(i+1,j)\rightarrow 0\]
is almost split, where $r,t$ are the natural inclusions, $p,q$ the natural projections, and by convention $M(i,0)=0$.
\end{proposition}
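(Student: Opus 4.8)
The plan is to verify that the proposed sequence is an almost split sequence by checking that it is a short exact sequence ending in $M(i+1,j)$ and then confirming it coincides with the unique almost split sequence terminating in that module. First I would establish exactness: since $r$ and $t$ are natural inclusions of the ``shorter'' indecomposable Nakayama modules into ``longer'' ones and $p,q$ are the natural projections, the map $\left[\begin{smallmatrix} r \\ p\end{smallmatrix}\right]$ is a monomorphism (its first component already is), the composite $\left[\begin{smallmatrix} -t & q\end{smallmatrix}\right]\circ\left[\begin{smallmatrix} r \\ p\end{smallmatrix}\right]=-tr+qp$ vanishes because both paths factor through the common overlap of the supports, and the map $\left[\begin{smallmatrix} -t & q\end{smallmatrix}\right]$ is an epimorphism onto $M(i+1,j)$. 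A dimension-vector count then forces the sequence to be exact, using the convention $M(i,0)=0$ to handle the degenerate case $j=1$ (where the sequence collapses to $0\to M(i,1)\to M(i,2)\to M(i+1,1)\to 0$, which one checks directly).

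Next I would argue that this exact sequence is non-split and almost split. For Nakayama algebras the Auslander--Reiten theory is completely explicit: the indecomposable modules $M(i,j)$ sit in a well-understood mesh pattern, and it is classical (see \cite{ASS}) that for a nonprojective indecomposable $M(i+1,j)$ the middle term of its almost split sequence is precisely the direct sum of the indecomposables obtained by ``extending by one in each direction,'' namely $M(i,j+1)\oplus M(i+1,j-1)$, with left term $\tau M(i+1,j)=M(i,j)$. I would identify $M(i,j)$ as the Auslander--Reiten translate $\tau M(i+1,j)$ directly, either by invoking the known formula for $\tau$ on acyclic Nakayama algebras or by combining Lemma \ref{syzygy} with the relation $\tau=\tau$ via the syzygy/cosyzygy description already available in the excerpt. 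Since an almost split sequence is uniquely determined up to isomorphism by its end terms, matching both the left and right terms with those of a genuine almost split sequence suffices.

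The main obstacle is the bookkeeping around the degenerate and boundary cases. The convention $M(i,0)=0$ means that when $j=1$ the summand $M(i+1,j-1)=M(i+1,0)$ disappears, so the middle term has only one summand; I must confirm that the proposed sequence still specializes correctly (to the standard short almost split sequence whose middle term is a single module) and that $M(i+1,j)$ is genuinely nonprojective in all the relevant cases, since otherwise no almost split sequence terminating in it would exist. Likewise one must ensure $M(i+1,j)\neq 0$, which is exactly guaranteed by the hypotheses $M(i,j)\neq 0$ and $M(i,j+1)\neq 0$ together with the admissibility constraints $1\le i\le m$, $1\le j\le l$, $2\le i+j\le m+1$ recorded before Lemma \ref{projind}. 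I would treat these boundary cases explicitly but briefly, since each reduces to a direct inspection of dimension vectors.

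Rather than reprove the general structure of almost split sequences for Nakayama algebras, I would lean on the fact that the Auslander--Reiten quiver of an acyclic Nakayama algebra is known explicitly, so the content of the proposition is really the \emph{identification} of the three vertices $M(i,j)$, $M(i,j+1)\oplus M(i+1,j-1)$, $M(i+1,j)$ as a single mesh together with the verification that the stated maps realize it. Thus the cleanest writeup exhibits the short exact sequence concretely, checks non-splitness (the sequence does not split because $M(i+1,j)$ is indecomposable and not a summand of the middle term, as a dimension count shows), and then cites the uniqueness of almost split sequences to conclude.
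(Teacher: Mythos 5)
Your overall route is the same as the paper's: the paper's entire proof is a one-line citation of Theorem V.4.1 of \cite{ASS} (the explicit description of almost split sequences over Nakayama algebras) together with the observation that $\rad^t M(i',j')=M(i',j'-t)$, which is exactly the ``classical mesh description'' you invoke; your additional direct verification of exactness and of the boundary cases ($j=1$, nonprojectivity of $M(i+1,j)$, nonvanishing of $M(i+1,j)$) is extra detail the paper omits. The one step that does not work as literally stated is your closing argument: uniqueness of almost split sequences says that two \emph{almost split} sequences with the same end terms are isomorphic; it does \emph{not} say that an arbitrary non-split short exact sequence $0\to\tau M\to E\to M\to 0$ with the correct end terms is almost split. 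In general only the classes in the socle of $\Ext^1(M,\tau M)$ as an $\End(M)$-module are almost split, so ``non-split plus matching ends plus uniqueness'' is not a valid inference. Here the gap is easy to close in either of two ways: (1) match your sequence, maps included, against the explicit sequence produced by Theorem V.4.1 of \cite{ASS}, which is what the paper does; or (2) note that $\Lml$ is representation-directed, so $M(i+1,j)$ is a brick whose identity does not factor through a projective, hence $\Ext^1\bigl(M(i+1,j),\tau M(i+1,j)\bigr)\cong D\underline{\End}\bigl(M(i+1,j)\bigr)\cong K$ is one-dimensional and every non-split extension in it is automatically almost split. With either repair your argument is correct.
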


\begin{proof}
This follows from Theorem V.4.1 in \cite{ASS} by noting that $\rad^tM(i',j')= M(i',j'-t)$ for any $t\geq 0$ and any indecomposable $\Lml$-module $M(i',j')$.
\end{proof}

\begin{lemma}
\label{tau}
Let $M(i,j)\neq 0$. Then 
\begin{itemize}
\item[(a)] If $M(i,j)$ is nonprojective, $\tau (M(i,j))= M(i-1,j)$.
\item[(b)] If $M(i,j)$ is noninjective, $\tau^{-}(M(i,j))=M(i+1,j)$.
\end{itemize}
\end{lemma}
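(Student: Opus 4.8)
The plan is to obtain both formulas by reading them directly off the almost split sequences supplied by Proposition \ref{almost split sequence in auslander-reiten}, once I have checked that its hypotheses are met in each case. The conceptual point is that for an indecomposable nonprojective module $X$ the almost split sequence ending in $X$ is the unique sequence of the form $0\to\tau X\to E\to X\to 0$, so $\tau X$ is recovered as its left-hand term; dually, for indecomposable noninjective $Y$ the almost split sequence starting in $Y$ recovers $\tau^- Y$ as its right-hand term. Thus it suffices, for a given $M(i,j)$, to locate the almost split sequence ending (respectively starting) at it among the family produced by Proposition \ref{almost split sequence in auslander-reiten} and to read off the appropriate endpoint.

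For part (a), I would first use Lemma \ref{projind}(a) to translate ``nonprojective'' into coordinate inequalities. Every nonzero module $M(i',l)$ is projective, since $M(i',l)\neq 0$ forces $i'\leq m-l+1$ and these are exactly the $P(k)$ with $l\leq k\leq m$; hence a nonprojective $M(i,j)$ must satisfy $j\leq l-1$. Likewise $i=1$ always yields a projective, so nonprojectivity forces $i\geq 2$. I would then apply Proposition \ref{almost split sequence in auslander-reiten} with $(i,j)$ replaced by $(i-1,j)$: the inequalities $i\geq 2$ and $j\leq l-1$, together with $M(i,j)\neq 0$, guarantee that $M(i-1,j)\neq 0$ and $M(i-1,j+1)\neq 0$, so the proposition produces the almost split sequence
\[0\to M(i-1,j)\to M(i-1,j+1)\oplus M(i,j-1)\to M(i,j)\to 0.\]
As this is the almost split sequence ending at $M(i,j)$, its left-hand term gives $\tau M(i,j)=M(i-1,j)$.

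For part (b), I would similarly read off ``noninjective'' from Lemma \ref{projind}(b): a nonzero $M(i,j)$ is injective precisely when $j=l$ or $i+j=m+1$, so noninjectivity is equivalent to the pair of conditions $j\leq l-1$ and $i+j\leq m$. These are exactly the conditions under which $M(i,j+1)\neq 0$, so Proposition \ref{almost split sequence in auslander-reiten} applies to $(i,j)$ itself and yields the almost split sequence starting at $M(i,j)$ and ending at $M(i+1,j)$; its right-hand term gives $\tau^-M(i,j)=M(i+1,j)$.

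The only genuine obstacle here is the bookkeeping just described: one must verify that ``nonprojective'' and ``noninjective'' translate exactly into the coordinate ranges in which Proposition \ref{almost split sequence in auslander-reiten} is stated, so that no boundary module slips through. I expect the crux to be the observation that nonprojectivity already forces $j<l$ (and noninjectivity is literally the hypothesis $M(i,j+1)\neq0$); once this matching is in place, both identities are immediate from the uniqueness of almost split sequences.
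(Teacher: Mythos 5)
Your proposal is correct and follows the paper's own argument exactly: the paper also deduces both formulas from Proposition \ref{almost split sequence in auslander-reiten} together with the uniqueness of almost split sequences. The coordinate bookkeeping you carry out (nonprojective $\Leftrightarrow$ $i\geq 2$ and $j\leq l-1$; noninjective $\Leftrightarrow$ $j\leq l-1$ and $i+j\leq m$) is exactly the detail the paper leaves implicit when it calls the lemma ``immediate.''
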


\begin{proof}
Immediate by Proposition \ref{almost split sequence in auslander-reiten} and by uniqueness, up to isomorphism, of almost split sequences (see \cite{ASS}, Chapter IV.1). 
\end{proof}

\begin{lemma}
\label{taun}
Let $M(i,j)\neq 0$. Then 
\begin{itemize}
\item[(a)] If $M(i,j)$ is nonprojective, we have
\[
\tn M(i,j)=\begin{cases} 
M\left(i+j-\frac{n}{2}l-1,l-j\right) &\mbox{if } n \text{ is even,}\\
M\left(i-\frac{n-1}{2}l-1,j\right) &\mbox{if } n \text{ is odd.}
\end{cases}
\]
\item[(b)] If $M(i,j)$ is noninjective, we have
\[
\tno M(i,j)=\begin{cases} 
M\left(i+j+\frac{n-2}{2}l+1,l-j\right) &\mbox{if } n \text{ is even,}\\
M\left(i+\frac{n-1}{2}l+1,j\right) &\mbox{if } n \text{ is odd.}
\end{cases}
\]
\end{itemize}
\end{lemma}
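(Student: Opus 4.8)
The plan is to read off both formulas directly from the iterated syzygy computation of Corollary \ref{ksyzygy} together with the description of $\tau$ and $\tau^-$ in Lemma \ref{tau}, using only the defining identities $\tn=\tau\circ\om^{\,n-1}$ and $\tno=\tau^-\circ\om^{-(n-1)}$. First I would record a reduction that lets Corollary \ref{ksyzygy} apply: by Lemma \ref{projind} an indecomposable $M(i,j)\neq 0$ is projective exactly when $i=1$ or $j=l$, and injective exactly when $j=l$ or $i+j=m+1$. Hence every nonprojective $M(i,j)$ has $j<l$, and likewise every noninjective $M(i,j)$ has $j<l$, so the standing hypothesis $j<l$ of Corollary \ref{ksyzygy} is automatic in both parts.

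For part (a) I would set $k=n-1$ in Corollary \ref{ksyzygy}(a) and split on the parity of $n$. If $n$ is even then $k=n-1$ is odd, so $\om^{\,n-1}M(i,j)=M\!\left(i+j-\tfrac n2 l,\,l-j\right)$; if $n$ is odd then $k=n-1$ is even, so $\om^{\,n-1}M(i,j)=M\!\left(i-\tfrac{n-1}{2}l,\,j\right)$. Applying Lemma \ref{tau}(a), which lowers the first coordinate by one, then yields the two displayed expressions for $\tn M(i,j)$. Part (b) is entirely dual: I would set $k=n-1$ in Corollary \ref{ksyzygy}(b), obtaining $M\!\left(i+j+\tfrac{n-2}{2}l,\,l-j\right)$ for $n$ even and $M\!\left(i+\tfrac{n-1}{2}l,\,j\right)$ for $n$ odd, and then apply Lemma \ref{tau}(b), which raises the first coordinate by one.

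The step needing genuine care is the verification that the hypotheses of Corollary \ref{ksyzygy} are met along the whole chain, namely $l+1\leq i_{k-1}+j_{k-1}$ in part (a) and $i^-_{k-1}\leq m-l+1$ in part (b); these are precisely the conditions that keep each intermediate syzygy (cosyzygy) in the branch of Lemma \ref{syzygy} used by the corollary. I expect this to be the main obstacle, and I would handle it through the convention $M(i,j)=0$ for out-of-range coordinates: when the displayed right-hand side is a genuine nonzero module the regime inequalities hold at every step, so the closed form is valid; when instead the displayed module is out of range, the chain has already passed through a projective (respectively injective) module, forcing $\om^{\,n-1}M(i,j)=0$ (respectively $\om^{-(n-1)}M(i,j)=0$) and hence $\tn M(i,j)=0$ (respectively $\tno M(i,j)=0$), in agreement with the convention. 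The boundary case is the cleanest check: when the first coordinate of $\om^{\,n-1}M(i,j)$ equals $1$ the module is projective, so $\tau$ kills it, exactly matching the first coordinate $0$ produced by Lemma \ref{tau}(a); the dual boundary case for $\tno$ is similar.
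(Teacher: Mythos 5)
Your proposal is correct and follows exactly the paper's route: the paper's proof is the one-liner ``Immediate by Corollary \ref{ksyzygy} and Lemma \ref{tau}'', invoking the same convention for out-of-range coordinates that you use to handle the degenerate cases. Your additional checks (that $j<l$ holds automatically and that the regime hypotheses of Corollary \ref{ksyzygy} are satisfied or else the translate vanishes) simply make explicit what the paper leaves implicit.
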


\begin{proof}
Immediate by Corollary \ref{ksyzygy} and Lemma \ref{tau}. Recall that by convention $M(i,j)\neq 0$ if and only if $1\leq i \leq m$, $1\leq j\leq l$ and $2\leq i+j\leq m+1$.
\end{proof}

\subsection{Proof of Theorem 2}

With our basic computations done, we are ready to prove Theorem \ref{second}.

\begin{TheoreM}{2}
\label{radicalpower}
$\La_{m,l}$ admits an $n$-cluster tilting subcategory if and only if $l=2$ and $m=nk+1$ for some $k\geq 0$ or $n$ is even and $m=\frac{n}{2}l+1+k(nl-l+2)$ for some $k\geq 0$.
\end{TheoreM}

\begin{proof}
For the case $l=2$ we refer to Proposition 6.2 in \cite{JAS}. Assume now that $l\geq 3$. Set
\[\cC = \text{add} \left( \bigoplus_{r=0}^\infty \tn^{-r} \Lml \right).\]
By Remark \ref{uniqueness}(b) it is enough to prove that $\cC$ satisfies condition (a) of Theorem \ref{main1} if and only if $n$ is even and $m=\frac{n}{2}l+1+k(nl-l+2)$ for some $k\geq 0$.

Assume first that $\cC$ satisfies condition (a) and $n$ is odd and we will reach a contradiction. Using Lemma \ref{taun} and an easy induction we can show that if $n$ is odd and $j<l$, we have
\[
\tau_n^{-k}(M(1,j)) = M\left(1+k\left(\frac{n-1}{2}l+1\right),j\right).
\]
Since $l\geq 3$, $M(1,1)$ and $M(1,2)$ are indecomposable projective noninjective by Lemma \ref{projind}. Therefore, by condition (a2) of Theorem \ref{main1} there exist integers $k_1,k_2>0$ such that $\tau_n^{-k_1}M(1,1)$ and $\tau_n^{-k_2}M(1,2)$ are indecomposable injective. Computing
\[\tau_n^{-k_1}M(1,1) = M\left(1+k_1\left(\frac{n-1}{2}l+1\right),1\right),\]
\[ \tau_n^{-k_2}(M(1,2)) = M\left(1+k_2\left(\frac{n-1}{2}l+1\right),2\right),\]
and using Lemma \ref{projind} we find that 
\[M\left(1+k_1\left(\frac{n-1}{2}l+1\right),1\right) = M(m,1),\]
\[ M\left(1+k_2\left(\frac{n-1}{2}l+1\right),2\right) = M(m-1,2)\]
are the only possibilities. In particular, 
\[1+k_1\left(\frac{n-1}{2}l+1\right) = m, 1+k_2\left(\frac{n-1}{2}l+1\right)=m-1\]
which imply $(k_1-k_2)\left(\frac{n-1}{2}l+1\right)=1$, contradicting $n>1$.

Hence, $n$ must be even; an easy induction here shows that for $j<l$ we have
\[
\tau_n^{-k}(M(1,j)) =\begin{cases} M\left(1+j+\frac{k-1}{2}l+k\left(\frac{n-2}{2}l+1\right),l-j\right) &\mbox{ if $k$ is odd,} \\ M\left(1+\frac{k}{2}l+k\left(\frac{n-2}{2}l+1\right),j\right) &\mbox{ if $k$ is even.}\end{cases}
\]
As before, $\tau_n^{-k_1}M(1,1)$ and $\tau_n^{-k_2}M(1,2)$ must be indecomposable injective for some integers $k_1,k_2>0$. If we assume that $k_1$ and $k_2$ have different parities or are both even, we reach a contradiction as in the case of $n$ being odd. Therefore $k_1$ must be odd and we have 
\[\tau_n^{-k_1}M(1,1)=M\left(2+\frac{k_1-1}{2}l+k_1\left(\frac{n-2}{2}l+1\right),l-1\right)=M(m+2-l,l-1)\]
as the only possibility by Lemma \ref{projind}. This implies $2+\frac{k_1-1}{2}l+k_1\left(\frac{n-2}{2}l+1\right)=m+2-l$ or equivalently $m=\frac{n}{2}l+1+\frac{k_1-1}{2}(nl-l+2)$ so we get the result for $k=\frac{k_1-1}{2}$.

Now, assume that $n$ is even and that $m=\frac{n}{2}l+1+k(nl-l+2)$ and we will show that condition (a) of Theorem \ref{main1} holds for $\cC$. (a1) holds by construction. Note that by Lemma \ref{taun}, $\tau_n^{-k}M(1,j)$ is indecomposable or zero. For $s=2k+1$ we have
\[\tau_n^{-s}M(1,j) = M\left(1+j+\frac{s-1}{2}l+s\left(\frac{n-2}{2}l+1\right),l-j\right) = M(m+1+j-l,l-j),\]
which is injective by Lemma \ref{projind}. Therefore $\tau_n^{-k}M(1,j)$ is nonzero for $0\leq k \leq s$, it is projective for $k=0$ and injective for $k=s$. Then (a2) holds since by Lemma \ref{taun}, we have that $\tn\tno M(i,j)=M(i,j)$ if $\tno M(i,j)\neq 0$ and $\tno\tn M(i,j)=M(i,j)$ if $\tn M(i,j)\neq 0$. Finally (a3) and (a4) hold by Lemma \ref{syzygy} and the proof is complete.
\end{proof}

\textbf{Example 3}
For $m=9$, $l=3$, $n=2$ and $k=1$ the Auslander-Reiten quiver of $\La_{9,3}=KQ_{9}/(\rad KQ_9)^3$ is

\[
\begin{tikzpicture}[scale=0.8, transform shape]
\tikzstyle{nct}=[circle, minimum width=4pt, draw, inner sep=0pt]
\tikzstyle{nct2}=[circle, minimum width=6pt, draw=white, inner sep=0pt]

\node[nct](11) at (1,1) {$(1,1)$};

\node[nct2](21) at (2,1) {$(2,1)$};
\node[nct2](31) at (3,1) {$(3,1)$};
\node[nct](41) at (4,1) {$(4,1)$};
\node[nct2](51) at (5,1) {$(5,1)$};
\node[nct](61) at (6,1) {$(6,1)$};
\node[nct2](71) at (7,1) {$(7,1)$};
\node[nct2](81) at (8,1) {$(8,1)$};
\node[nct](91) at (9,1) {$(9,1)$};
\node[nct](12) at (1.5,2) {$(1,2)$};

\node[nct2](22) at (2.5,2) {$(2,2)$};
\node[nct](32) at (3.5,2) {$(3,2)$};
\node[nct2](42) at (4.5,2) {$(4,2)$};
\node[nct2](52) at (5.5,2) {$(5,2)$};
\node[nct](62) at (6.5,2) {$(6,2)$};
\node[nct2](72) at (7.5,2) {$(7,2)$};
\node[nct](82) at (8.5,2) {$(8,2)$};

\node[nct](13) at (2,3) {$(1,3)$};
\node[nct](23) at (3,3) {$(2,3)$};
\node[nct](33) at (4,3) {$(3,3)$};
\node[nct](43) at (5,3) {$(4,3)$};
\node[nct](53) at (6,3) {$(5,3)$};
\node[nct](63) at (7,3) {$(6,3)$};
\node[nct](73) at (8,3) {$(7,3)$};

\draw[->] (11) -- (12);
\draw[->] (21) -- (22);
\draw[->] (31) -- (32);
\draw[->] (41) -- (42);
\draw[->] (51) -- (52);
\draw[->] (61) -- (62);
\draw[->] (71) -- (72);
\draw[->] (81) -- (82);

\draw[->] (12) -- (21);
\draw[->] (22) -- (31);
\draw[->] (32) -- (41);
\draw[->] (42) -- (51);
\draw[->] (52) -- (61);
\draw[->] (62) -- (71);
\draw[->] (72) -- (81);
\draw[->] (82) -- (91);

\draw[->] (12) -- (13);
\draw[->] (22) -- (23);
\draw[->] (32) -- (33);
\draw[->] (42) -- (43);
\draw[->] (52) -- (53);
\draw[->] (62) -- (63);
\draw[->] (72) -- (73);

\draw[->] (13) -- (22);
\draw[->] (23) -- (32);
\draw[->] (33) -- (42);
\draw[->] (43) -- (52);
\draw[->] (53) -- (62);
\draw[->] (63) -- (72);
\draw[->] (73) -- (82);
\end{tikzpicture}
\]

where we write $(i,j)$ instead of $M(i,j)$. The circled modules are the indecomposable summands of the $2$-cluster tilting module of $\La_{9,3}$ and they satisfy
\[\begin{tikzpicture}[scale=0.6, transform shape]

\node (A) at (0,0) {$(1,1)$};
\node (B) at (3,0) {$(3,2)$};
\node (C) at (6,0) {$(6,1)$};
\node (D) at (9,0) {$(8,2)$};

\draw[->] (A) to [out=30,in=150] node[draw=none, midway, above] {$\tau_2^-$} (B);
\draw[->] (B) to [out=-150,in=-30] node[draw=none, midway, below] {$\tau_2$} (A);
\draw[->] (B) to [out=30,in=150] node[draw=none, midway, above] {$\tau_2^-$} (C);
\draw[->] (C) to [out=-150,in=-30] node[draw=none, midway, below] {$\tau_2$} (B);
\draw[->] (C) to [out=30,in=150] node[draw=none, midway, above] {$\tau_2^-$} (D);
\draw[->] (D) to [out=-150,in=-30] node[draw=none, midway, below] {$\tau_2$} (C);
\end{tikzpicture},\]
\[\begin{tikzpicture}[scale=0.6, transform shape]

\node (A) at (0,0) {$(1,2)$};
\node (B) at (3,0) {$(4,1)$};
\node (C) at (6,0) {$(6,2)$};
\node (D) at (9,0) {$(9,1)$};

\draw[->] (A) to [out=30,in=150] node[draw=none, midway, above] {$\tau_2^-$} (B);
\draw[->] (B) to [out=-150,in=-30] node[draw=none, midway, below] {$\tau_2$} (A);
\draw[->] (B) to [out=30,in=150] node[draw=none, midway, above] {$\tau_2^-$} (C);
\draw[->] (C) to [out=-150,in=-30] node[draw=none, midway, below] {$\tau_2$} (B);
\draw[->] (C) to [out=30,in=150] node[draw=none, midway, above] {$\tau_2^-$} (D);
\draw[->] (D) to [out=-150,in=-30] node[draw=none, midway, below] {$\tau_2$} (C);
\end{tikzpicture}.\]

\section{$d$-representation-finite Nakayama algebras}

In this section we classify the Nakayama algebras admitting a $d$-cluster tilting subcategory, where $d=\text{gl.dim.}\La$. Even though cyclic Nakayama algebras are not representation-directed, we include a proof that shows that no cyclic Nakayama algebra is $d$-representation-finite to present the full classification. Note that the following proposition shows that the homogeneous case of the previous chapter plays a special role.

\begin{proposition}
\label{onlyhomogen}
Let $\La$ be a Nakayama algebra amd assume that $\La$ admits a $d$-cluster tilting subcategory. Then $\La=\La_{m,l}$.
\end{proposition}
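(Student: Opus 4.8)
The plan is to show that admitting a $d$-cluster tilting subcategory forces the defining ideal to be $(\rad KQ)^l$, and then to exclude the cyclic quiver by a self-injectivity argument. Throughout we use that $\La$ is a Nakayama algebra, so every indecomposable is uniserial and can be written as $M(i,j)$ (top vertex and length) as in Section 4, and that conditions (1)--(4) of Theorem \ref{char} are necessary for \emph{any} finite-dimensional algebra (Remark \ref{uniqueness}(a)). Since the projective cover and injective hull of an indecomposable Nakayama module are again uniserial, every syzygy and cosyzygy of an indecomposable is uniserial, hence indecomposable or zero; thus conditions (3) and (4) hold automatically, and the whole content lies in condition (2), i.e.\ in the requirement that $\La, D\La\in\cC$ together with the bijection $\tau_d\colon\cC_P\to\cC_I$.

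First I would record, exactly as in Lemmas \ref{syzygy}, \ref{tau} and \ref{taun} but for an arbitrary Kupisch series $(c_k)$, explicit formulas for $\om$, $\om^-$, $\tau$ and hence $\tau_d=\tau\om^{d-1}$ on the modules $M(i,j)$. The point is that the length of $\om M(i,j)$ equals $c-j$, where $c$ is the length of the projective cover of $M(i,j)$ (one checks this directly against Lemma \ref{syzygy} in the homogeneous case); consequently the two-step behaviour of the syzygy, which governs $\tau_d$, is controlled entirely by the lengths $c_k$. By Remark \ref{uniqueness}(b) we have $\cC=\add\big(\bigoplus_{r\ge0}\tau_d^{-r}\La\big)$, so condition (2) says precisely that, starting from each indecomposable projective, the orbit under $\tau_d^{-}$ consists of nonzero indecomposables and terminates exactly at an indecomposable injective. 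I would translate this, via the support description $\cC^{\perp_d}=\bigcap_{M\in\cC_P}\LS(M)^c$ from the proof of Theorem \ref{main1} and the necessary vanishing $\Ext^i(D\La,\La)=0$ for $0<i<d$, into a system of equalities among the $c_k$.

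The main step, which I expect to be the hard part, is to deduce from this system that $I=(\rad KQ)^l$, i.e.\ that the relations are homogeneous (so that $c_k=\min(k,l)$ in the acyclic case and $c_k=l$ in the cyclic case). The mechanism is that any deviation from homogeneity produces either a projective whose $\tau_d^{-}$-orbit dies before reaching an injective, or a projective $X$ and injective $Y$ in $\cC$ with $\Ext^i(X,Y)\ne0$ for some $0<i<d$; either outcome contradicts condition (2) or the definition of a $d$-cluster tilting subcategory. I would carry this out by comparing the length of $M(i,j)$ with that of $\om^{d-1}M(i,j)$ and forcing the two expressions for $\tau_d$ coming from consecutive vertices to agree, which pins the $c_k$ down to the homogeneous pattern. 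Since this uses only the local formulas for $\om$ and $\tau$, the combinatorics is identical for the acyclic and the cyclic quiver; this is why the argument needs no representation-directedness and applies to cyclic Nakayama algebras as well.

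Finally I would dispatch the cyclic case. Having shown $I=(\rad KQ)^l$, we have $\La=KQ/(\rad KQ)^l$ with $Q$ either $Q_m$ or an oriented cycle on $m$ vertices. If $Q$ is a cycle, then every indecomposable projective and every indecomposable injective is uniserial of length $l$, so $\La$ is a self-injective Nakayama algebra; a non-semisimple self-injective algebra has infinite global dimension, contradicting $\gldim\La=d<\infty$ (note $l\ge2$, since otherwise $\La$ is semisimple and $d=0$). Hence $Q=Q_m$ is acyclic and $\La=\Lml$, as claimed.
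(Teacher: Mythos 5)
The overall skeleton of your plan is sound and partly matches the paper: you correctly observe that for a Nakayama algebra every syzygy and cosyzygy of an indecomposable is uniserial, hence indecomposable or zero, so the burden falls on the $\tau_d$/$\tau_d^-$ bijection of Proposition \ref{isomorphism}; and your disposal of the homogeneous cyclic case via self-injectivity is exactly the paper's. But the core of the proposition --- that non-homogeneous relations are incompatible with a $d$-cluster tilting subcategory --- is not proved in your proposal; it is only announced (``the main step, which I expect to be the hard part\dots I would carry this out by comparing the length of $M(i,j)$ with that of $\om^{d-1}M(i,j)$\dots''). This is precisely where all the content lies, and ``forcing the two expressions for $\tau_d$ coming from consecutive vertices to agree'' is not an argument: you would need to show concretely which configuration of Kupisch lengths a deviation from homogeneity produces, and why that configuration kills an orbit or produces a forbidden extension. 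The paper does this by a specific and rather different mechanism: non-homogeneity yields three projectives $M(x,y)$, $M(x+1,y)$, $M(x+1,y+1)$ (or dually three injectives); the bijection $\tau_d$, $\tau_d^-$ forces $M(x+2,y)=\Omega^{d-1}(N)$ for $N=\tau_d^-M(x+1,y)$, whence $\pd M(x+2,y)=1$; but a direct computation using the projectivity of $M(x+1,y+1)$ shows $\om M(x+2,y)=M(x+1,1)$ is a nonprojective simple, a contradiction. Nothing equivalent to this chain appears in your write-up.

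Two further points. First, you invoke Remark \ref{uniqueness}(b) to write $\cC=\add\bigl(\bigoplus_{r\ge0}\tau_d^{-r}\La\bigr)$, but that remark is proved only for representation-directed algebras, so it is not available in the cyclic case; for cyclic Nakayama algebras you may only use the necessary conditions of Proposition \ref{isomorphism} and Corollary \ref{easy} (the paper is careful to argue the cyclic case from these alone, exhibiting a simple projective module that cannot exist on a cycle). Second, even granting your main step, the reduction ``non-homogeneity gives either a dying orbit or a nonvanishing $\Ext$'' needs the combinatorial fact that a non-homogeneous admissible ideal forces a specific local pattern in the Kupisch series (the paper's cases (a) and (b)); this is itself a step that must be justified, not assumed.
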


\begin{proof}
Let us first assume that $\La=KQ_m/I$ is an acyclic Nakayama algebra that admits a $d$-cluster tilting subcategory $\mathcal{C}$. Assuming that $I\neq (\rad KQ_m)^l$ implies that there exist some $x$ and $y$ such that at least one of the two following cases is true:
\begin{enumerate}
\item[(a)] $M(x,y)$, $M(x+1,y)$ and $M(x+1,y+1)$ are projective,
\item[(b)] $M(x-1,y+1)$, $M(x,y)$ and $M(x+1,y)$ are injective.
\end{enumerate}
Let us prove that case (a) leads to a contradiction; the case (b) is similar.

Since the ideal $I$ is admissible, we have $y\geq 2$. Moreover, since $M(x+1,y+1)\neq 0$, $M(x+1,y)$ is noninjective. Then, by Proposition \ref{isomorphism}, $\tau_d^-(M(x+1,y))=N$ is an indecomposable nonprojective module and moreover, by the same proposition, $\tau_d(N)=M(x+1,y)$. By applying $\tau^-$ on this we get
\[M(x+2,y)=\tau^-(M(x+1,y)) = \tau^- (\tau (\Omega^{d-1}(N))), \]
so that
\[ M(x+2,y) = \Omega^{d-1}(N).\]
We have $\pd\left(\Omega^{d-1}N\right)\leq 1$, since otherwise we would have $\pd N >d$. Moreover, $\Omega^{d-1}N$ is not projective since $\tau_d(N)\neq 0$, so $\pd\left(\Omega^{d-1}N\right)=1$. Therefore $\pd\left(M(x+2,y)\right)=1$. But since $M(x+1,y+1)$ is projective, the short exact sequence
\[ 0\rightarrow M(x+1,1) \rightarrow M(x+1,y+1) \rightarrow M(x+2,y) \rightarrow 0\]
implies that $\om M(x+2,y) = M(x+1,1)$. But $M(x+1,1)$ is nonprojective, since it is a simple module different than $M(1,1)$ which contradicts $\pd M(x+2,y)=1$.

To complete the proof, it remains to show that a cyclic Nakayama algebra $\cL$ with $\gldim \cL=\tilde{d}<\infty$ admits no $\tilde{d}$-cluster tilting subcategory. This case is very similar to the previous one so we omit most of the details. 

Let $\cL=K\tilde{Q}_m/I$ be a cyclic Nakayama algebra where $\tilde{Q}_m$ is the quiver

\[
\begin{tikzpicture}[scale=0.7, transform shape]
\node (Q) at (-4,0) {$\cQ_m:$};
\node (1) at (1,1.732050808) {$m$};
\node (0) at (-1,1.732050808) {$0$};
\node (m-1) at (-2,0) {$1$};
\node (m-2) at (-1,-1.732050808) {$ $};
\node (dots) at (1,-1.732050808) {$ $};
\node (2) at (2,0) {$m-1$};
\draw[->] (0) to [out=30, in=150]  node[draw=none, above] {$a_{m}$} (1);
\draw[->] (1) to [out=-30, in=90] node[draw=none, midway, right] {$a_{m-1}$} (2);
\draw[->] (m-1) to [out=90, in=-150] node[draw=none, midway, left] {$a_{0}$} (0);
\draw[->] (m-2) to [out=150, in=-90] node[draw=none, midway, left] {$a_{1}$} (m-1);
\draw[->] (2) to [out=-90, in=30] node[draw=none, midway, right] {$a_{m-2}$} (dots);
\draw[dotted] (dots) to [out=-150,in=-30] (m-2);
\end{tikzpicture}.
\]

Then $I\neq(\rad\tilde{Q}_m)^l$ since otherwise $\cL$ is self-injective and thus of infinite global dimension. Then there exists an indecomposable projective noninjective module $\tilde{P}$ and we must have $\pd(\tau^-\tilde{P})=1$ as in the previous case. Similarly to the previous case, it is not difficult to see that $\om\tau^-\tilde{P}$ is simple, which is a contradiction since there exists no simple projective $\cL$-module.

\end{proof}

\subsection{Global dimension of $\La_{m,l}$}
Since given $m$ and $l$ we know by Theorem \ref{radicalpower} when $\La_{m,l}$ admits an $n$-cluster tilting subcategory, it is enough to see what the global dimension of $\La_{m,l}$ is and then check under what conditions on $m$ and $l$ we have $n=d$.

\begin{proposition}
\label{dimensions}
Let $\La=\La_{m,l}$. Then
\begin{itemize}
\item[(a)] Let $M(x,y)\neq 0$ and assume $x=1$ or $y=l$. Then $\pd M(x,y)=0$. 
\item[(b)] Let $M(x,y)\neq 0$ and assume $x>1$ and $y<l$. Write $x-2=ql+r$ with $0\leq r<l$. We have
\begin{equation}
\label{pdform}
\pd M(x,y) =
\begin{cases}
2q+1 &\text{ if $y<l-r$,} \\
2q+2 &\text{ if $y\geq l-r$.} \\
\end{cases}
\end{equation}
\item[(c)] Let $m-1=q'l+r'$, $0\leq r'\leq l-1$. Then
\begin{equation}
\label{pdinj}
\pd M(m+1-j,j) = \begin{cases}
\floor*{\frac{m-1}{l}}+ \ceil*{\frac{m-1}{l}} &\mbox{ if $r'=0$ or $j\leq r'$,} \\
\floor*{\frac{m-1}{l}}+ \ceil*{\frac{m-1}{l}}-1 &\mbox{ otherwise.} 
\end{cases}
\end{equation}

\item[(d)] \[\gldim \Lambda = \floor*{\frac{m-1}{l}}+ \ceil*{\frac{m-1}{l}}.\]
\end{itemize}
\end{proposition}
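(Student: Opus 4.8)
The plan is to extract everything from the explicit syzygy formulas of Lemma \ref{syzygy} and Corollary \ref{ksyzygy} and then optimize. Part (a) is immediate: if $x=1$ or $y=l$ then $M(x,y)$ is projective by Lemma \ref{projind}, so $\pd M(x,y)=0$. For part (b) I would induct on $q=\floor*{\frac{x-2}{l}}$, the engine being the identity $\om^2 M(x,y)=M(x-l,y)$, which holds whenever the two intermediate syzygies stay in the ``long'' range $i+j\ge l+1$ of Lemma \ref{syzygy}. This identity lowers $q$ by one while preserving both $y$ and the residue $r$, so $(x-l)-2=(q-1)l+r$, and since $M(x,y)$ and $\om M(x,y)$ are both nonprojective when $x\ge l+2$ one has $\pd M(x,y)=2+\pd M(x-l,y)$, closing the induction.

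The base case is $q=0$, i.e. $2\le x\le l+1$. Here $\om M(x,y)=M(1,x-1)$ if $x+y\le l$ and $M(x+y-l,l-y)$ if $x+y\ge l+1$. When $x+y\le l+1$ the syzygy already has first coordinate $1$, hence is projective, giving $\pd=1$, which matches the condition $y<l-r$; when $x+y\ge l+2$, equivalently $y\ge l-r$, one further syzygy lands on a projective and $\pd=2$. Pinning down the residue $r$ in the boundary cases $x+y\in\{l,l+1,l+2\}$ is exactly where the two branches of (\ref{pdform}) are decided, so this is the spot to be careful.

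Part (c) is then a specialization of (b) to the injective module $M(m+1-j,j)$, so $x=m+1-j$ and $y=j$. Writing $m-1=q'l+r'$, the task is to compute the residue $r$ of $x-2=m-1-j$. When $j\le r'$ one finds $q=q'$ and $r=r'-j$, landing in the branch $y<l-r$, so $\pd=2q'+1$; when $j>r'$ one finds $q=q'-1$ and $r=l-(j-r')$, landing in $y\ge l-r$, so $\pd=2q'$; the case $r'=0$ behaves like the latter. Since $\floor*{\frac{m-1}{l}}+\ceil*{\frac{m-1}{l}}$ equals $2q'$ when $r'=0$ and $2q'+1$ when $r'\ge 1$, rewriting the two values in these terms produces exactly (\ref{pdinj}).

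Finally, for (d) the lower bound is free: taking $j=1$ in (c), the injective $M(m,1)$ has $\pd=\floor*{\frac{m-1}{l}}+\ceil*{\frac{m-1}{l}}$, so $\gldim\La$ is at least this. For the upper bound I would maximize the formula of (b) over all nonzero $M(x,y)$, using the nonvanishing constraint $x+y\le m+1$. In the ``$+2$'' branch the inequality $y\ge l-r$ combined with $x+y\le m+1$ forces $l(q+1)\le m-1$, hence $q\le q'-1$ and $\pd\le 2q'$; in the ``$+1$'' branch $x\le m$ forces $q\le\floor*{\frac{m-2}{l}}$, giving $\pd\le 2q'+1$ (and $\le 2q'-1$ when $r'=0$). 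In every case $\pd M(x,y)\le\floor*{\frac{m-1}{l}}+\ceil*{\frac{m-1}{l}}$, so equality holds. The main obstacle throughout is the bookkeeping in (b): one must check at each step that the syzygy remains in the long range so that $\om^2$ genuinely subtracts $l$ and does not terminate at a projective earlier than predicted, and the analogous residue analysis when specializing to injectives in (c) is where the case distinctions are most delicate.
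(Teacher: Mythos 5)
Your argument is correct and rests on the same computational backbone as the paper's proof: everything is read off from Lemma \ref{syzygy} and verified by residue bookkeeping, and parts (a) and (c) match the paper essentially verbatim. The two places where you genuinely diverge are these. For (b), you induct on $q$ via the two-step identity $\om^2 M(x,y)=M(x-l,y)$ (valid once $x\geq l+2$, where both $M(x,y)$ and $M(x+y-l,l-y)$ are nonprojective), whereas the paper inducts on $x+y$ one syzygy at a time; your version has the advantage that $y$ and the residue $r$ are literally unchanged along the induction, so the case split in (\ref{pdform}) never needs to be re-examined, at the price of a slightly heavier base case $2\leq x\leq l+1$, which you do handle correctly (the boundary $x+y=l+1$ still gives a projective syzygy $M(1,l-y)$, consistent with the branch $y<l-r$). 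The more substantive difference is the upper bound in (d): the paper obtains it from (c) by using that the $M(m+1-j,j)$ are exactly the nonprojective injectives, i.e.\ it implicitly invokes the fact that an algebra of finite global dimension satisfies $\gldim\La=\pd(D\La)$; you instead maximize the closed formula of (b) over all nonzero $M(x,y)$ subject to $x+y\leq m+1$, and your two estimates ($(q+1)l\leq m-1$ in the $2q+2$ branch, $q\leq\lfloor(m-2)/l\rfloor$ in the $2q+1$ branch) are correct. Your route is more self-contained since it avoids the general fact about injectives; the paper's is shorter given that fact. One small caveat shared by both proofs: the derivation of (c) from (b) tacitly assumes $M(m+1-j,j)$ is nonprojective (equivalently $1\leq j\leq l-1$ and $m+1-j>1$), which is the only range in which (c) is actually used.
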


\begin{proof}
\begin{itemize}
\item[(a)] Follows immediately from Lemma \ref{projind} since $M(x,y)$ is projective.
\item[(b)] Throughout, we use
\begin{equation}
\label{syzform}
\pd M(x,y) = \pd \Omega (M(x,y))+1.
\end{equation}
We first prove (\ref{pdform}) for $x+y\leq l$. In that case $x-2<x+y\leq l$ so that $q=0$ and $r=x-2$. Then by Lemma \ref{syzygy} $\om M(x,y)=M(1,x-1)$ which is projective by Lemma \ref{projind}. Therfore, $\pd M(x,y)=1=0q+1$ as required, since $y<l-(x-2)$.

Now we use induction on $x+y\geq l$. The base case was just proved. Assume that (\ref{pdform}) holds when $l\leq x+y\leq k-1$. Let $M(x,y)$ be such that $x+y=k$. Since $x+y\geq l+1$, Lemma \ref{syzygy} implies $\om M(x,y)=M(x+y-l,l-y)$. In particular, (\ref{pdform}) holds for $\om M(x,y)$ by induction assumption.

Let $x-2=ql+r$ and assume first that $y<l-r$. We calculate
\[x+y-l-2=ql+r+2+y-l-2=(q-1)l+r+y,\]
where $r+y<r+l-y=l$, so $x+y-l-2=q'l+r'$ with $q'=q-1$, $r'=r+y$. To apply (\ref{pdform}) to $\Omega M(x,y)$, we need to compare $l-y$ with $l-r'$ so from $0\geq -r$ we get
\[l-y \geq l-r-y = l -(r+y) = l-r',\]
and thus by (\ref{pdform}) we have $\pd M(x+y-l,l-y)=2q'+2$. Then, we have
\[\pd M(x,y) = \pd M(x+y-l,l-y)+1 = 2q'+2 + 1 \]
\[= 2(q-1)+2+1=2q+1.\]
as required.

For the last case, let $x-2=ql+r$ and $y\geq l-r$. Now we have
\[x+y-l-2 = ql+r+2+y-l-2 = (q-1)l+l+(r+y-l)=ql+(r+y-l).\]
Since $l-r\leq y <l$, we get
\[0\leq r+y-l<r \leq l-1.\]
So $x+y-l-2=ql+r'$ with $r'=r+y-l$. We compare $l-y$ with $l-r'=2l-r-y$, so from $l>r$ we get
\[2l-r-y > l-y\]
or
\[l-r' > l-y.\]
So $\pd M(x+y-l,l-y)=2q+1$ by (\ref{pdform}) and (\ref{syzform}) now gives
\[\pd M(x,y) = \pd M(x+y-l,l-y)+1 = 2q+1 +1=2q+2\]
as required.

\item[(c)] We will prove (c) using (b). Let $m+1-j-2=ql+r$ so that $m-1=ql+j+r$ for $0\leq r\leq l-1$. Assume first that $j<l-r$ so that $q'=q$ and $r'=j+r<l$. Then $r'\neq 0$ and $j\leq j+r=r'$, so that
\[\floor*{\frac{m-1}{l}}+\ceil*{\frac{m-1}{l}}=\floor*{\frac{ql+j+r}{l}}+\ceil*{\frac{ql+j+r}{l}}\]
\[=2q+\floor*{\frac{j+r}{l}}+\ceil*{\frac{j+r}{l}}\overset{0<j+r<l}=2q+0+1=2q+1=\overset{(\ref{pdform})}{=}\pd M(m+1-j,j),\]
as required.

Assume now that $j\geq l-r$ so that $j+r\geq l$ and 
\[
m-1=ql+r+j=ql+l+r+j-l=(q+1)l+(r+j-l)=q'l+r'.
\]
Note that $j\leq r'$ gives $l\leq r$, a contradiction, so $j>r'$. If $r'=0$ we have
\[\floor*{\frac{m-1}{l}}+\ceil*{\frac{m-1}{l}}=\floor*{\frac{q'l}{l}}+\ceil*{\frac{q'l}{l}}\]
\[=2q'=2(q+1)=2q+2\overset{(\ref{pdform})}{=}\pd M(m+1-j,j),\]
as required. Finally, if $r'\neq 0$ we have
\[\floor*{\frac{m-1}{l}}+\ceil*{\frac{m-1}{l}}-1=\floor*{\frac{q'l+r'}{l}}+\ceil*{\frac{q'l+r'}{l}}-1\]
\[=2q'+\floor*{\frac{r+j-l}{l}}+\ceil*{\frac{r+j-l}{l}}-1\overset{0< r+j-l<l}{=}2q+2+0+1-1=2q+2\]
\[\overset{(\ref{pdform})}{=}\pd M(m+1-j,j),\]
which completes the proof of (c).

\item[(d)] Note that by (c) we have $\text{gl.dim}\Lambda_{m,l}\leq \floor*{\frac{m-1}{l}}+\ceil*{\frac{m-1}{l}}$, since $M(m+1-j,j)$ are exactly the injective non-projective $\La_{m,l}$-modules. Since $\pd M(m,1)=\floor*{\frac{m-1}{l}}+\ceil*{\frac{m-1}{l}}$ by (c), the result follows.
\end{itemize}
\end{proof}

\subsection{Proof of Theorem 3}
Now we are ready for the classification of the acyclic Nakayama algebras which admit a $d$-cluster tilting subcategory.

\begin{TheoreM}{3}
\label{mainglobal}
$\La$ admits a $d$-cluster tilting subcategory $\cC$ if and only if $\La=\La_{m,l}$ and $l \mid m-1$ or $l=2$. Moreover, in that case, $\cC=\add(\La\oplus D\La)$ and $d=2\frac{m-1}{l}$.
\end{TheoreM}

\begin{proof}
For the case $l=2$ we refer to Proposition 6.2 in \cite{JAS}, so we assume $l\geq 3$.

Assume first that $\La=\La_{m,l}$ and $l\mid m-1$. Then, by Proposition \ref{dimensions}, we have $d=2\frac{m-1}{l}$. Then, Theorem \ref{radicalpower} implies that $\La=\La_{\frac{d}{2}l+1,l}$ admits a $d$-cluster tilting subcategory.

Assume now that $\La$ admits a $d$-cluster tilting subcategory. Then $\La=\La_{m,l}$ by Proposition \ref{onlyhomogen}. By Theorem \ref{radicalpower} we get 
\[m=\frac{d}{2}l+1+k(dl-l+2)\]
for some $k\geq 0$ and $d$ must be even. By Proposition \ref{dimensions} we have that $d$ is even if and only if $d=2\frac{m-1}{l}$ which implies $l \mid m-1$. Finally, a direct computation using Lemma \ref{taun} gives $\tau_d^-M(1,j)=M(m+1-l+j,l-j)$ for any $1\leq j \leq l-1$. Since $M(1,j)$ and $M(m+1-l+j,l-j)$ are the indecomposable projective noninjective respectively injective nonprojective modules, we have $\tau_d^{-k}\La=0$ for $k\geq 2$. Hence
\[\cC=\add\left(\bigoplus_{r=0}^{\infty}\tau_d^{-r}\La\right) =\add(\La\oplus \tau_d^-\La)=\add(\La\oplus D\La),\]
which finishes the proof.
\end{proof}

\begin{example}
\label{main}
As an example, let $n=4$, and $l=4$. Since we want $n=d$, we must have $m=\frac{4}{2}4+1=9$. Then the Auslander-Reiten quiver of $\La_{9,4}$ is

\[
\begin{tikzpicture}[scale=0.8, transform shape]
\tikzstyle{nct}=[circle, minimum width=4pt, draw, inner sep=0pt]
\tikzstyle{nct2}=[circle, minimum width=6pt, draw=white, inner sep=0pt]

\node[nct](11) at (1,1) {$(1,1)$};
\node[nct2](21) at (2,1) {$(2,1)$};
\node[nct2](31) at (3,1) {$(3,1)$};
\node[nct2](41) at (4,1) {$(4,1)$};
\node[nct2](51) at (5,1) {$(5,1)$};
\node[nct2](61) at (6,1) {$(6,1)$};
\node[nct2](71) at (7,1) {$(7,1)$};
\node[nct2](81) at (8,1) {$(8,1)$};
\node[nct](91) at (9,1) {$(9,1)$};

\node[nct](12) at (1.5,2) {$(1,2)$};
\node[nct2](22) at (2.5,2) {$(2,2)$};
\node[nct2](32) at (3.5,2) {$(3,2)$};
\node[nct2](42) at (4.5,2) {$(4,2)$};
\node[nct2](52) at (5.5,2) {$(5,2)$};
\node[nct2](62) at (6.5,2) {$(6,2)$};
\node[nct2](72) at (7.5,2) {$(7,2)$};
\node[nct](82) at (8.5,2) {$(8,2)$};

\node[nct](13) at (2,3) {$(1,3)$};
\node[nct2](23) at (3,3) {$(2,3)$};
\node[nct2](33) at (4,3) {$(3,3)$};
\node[nct2](43) at (5,3) {$(4,3)$};
\node[nct2](53) at (6,3) {$(5,3)$};
\node[nct2](63) at (7,3) {$(6,3)$};
\node[nct](73) at (8,3) {$(7,3)$};

\node[nct](14) at (2.5,4) {$(1,4)$};
\node[nct](24) at (3.5,4) {$(2,4)$};
\node[nct](34) at (4.5,4) {$(3,4)$};
\node[nct](44) at (5.5,4) {$(4,4)$};
\node[nct](54) at (6.5,4) {$(5,4)$};
\node[nct](64) at (7.5,4) {$(6,4)$};

\draw[->] (11) -- (12);
\draw[->] (21) -- (22);
\draw[->] (31) -- (32);
\draw[->] (41) -- (42);
\draw[->] (51) -- (52);
\draw[->] (61) -- (62);
\draw[->] (71) -- (72);
\draw[->] (81) -- (82);

\draw[->] (12) -- (21);
\draw[->] (22) -- (31);
\draw[->] (32) -- (41);
\draw[->] (42) -- (51);
\draw[->] (52) -- (61);
\draw[->] (62) -- (71);
\draw[->] (72) -- (81);
\draw[->] (82) -- (91);

\draw[->] (12) -- (13);
\draw[->] (22) -- (23);
\draw[->] (32) -- (33);
\draw[->] (42) -- (43);
\draw[->] (52) -- (53);
\draw[->] (62) -- (63);
\draw[->] (72) -- (73);

\draw[->] (13) -- (22);
\draw[->] (23) -- (32);
\draw[->] (33) -- (42);
\draw[->] (43) -- (52);
\draw[->] (53) -- (62);
\draw[->] (63) -- (72);
\draw[->] (73) -- (82);

\draw[->] (14) -- (23);
\draw[->] (24) -- (33);
\draw[->] (34) -- (43);
\draw[->] (44) -- (53);
\draw[->] (54) -- (63);
\draw[->] (64) -- (73);

\draw[->] (13) -- (14);
\draw[->] (23) -- (24);
\draw[->] (33) -- (34);
\draw[->] (43) -- (44);
\draw[->] (53) -- (54);
\draw[->] (63) -- (64);
\end{tikzpicture},
\]
where the direct sum of all encircled modules is a $4$-cluster tilting module.
\end{example}

\bibliography{nct}
\bibliographystyle{halpha}

\end{document}